\definecolor{ghcolor}{RGB}{0, 150, 200} 
\definecolor{winestain}{rgb}{0.5,0,0}
\theoremstyle{plain}
\newtheorem{thm}{Theorem}[section] 
\newtheorem{defn}[thm]{Definition}
\newtheorem{prop}[thm]{Proposition}
\newtheorem{lemma}[thm]{Lemma}
\newtheorem{corollary}[thm]{Corollary}
\theoremstyle{definition}
\newtheorem{rem}[thm]{Remark}
\newcommand{\Zp}{\mathbb{Z}_p}
\newcommand{\Qp}{\mathbb{Q}_p}
\newcommand{\Hom}{\textnormal{Hom}}
\newcommand{\Fil}{\textnormal{Fil}}
\newcommand{\Mod}{\textnormal{Mod}}
\newcommand{\Gal}{\textnormal{Gal}}
\newcommand{\Ker}{\textnormal{Ker}}
\newcommand{\Coker}{\textnormal{Coker}}
\newcommand{\Mat}{\textnormal{Mat}}
\newcommand{\M}{\mathcal{M}}
\newcommand{\bolda}{\boldsymbol{\alpha}}
\newcommand{\boldb}{\boldsymbol{\beta}}
\newcommand{\bolde}{\boldsymbol{e}}
\newcommand{\FM}{\Fil^r \M}
\newcommand{\fr}{\phi_r}
\newcommand{\Modus}{\Mod_{k[u]/u^s}^{\phi}}
\newcommand{\Modut}{\Mod_{k[u]/u^t}^{\phi}}
\newcommand{\alphaseq}{\alpha_1, \cdots, \alpha_d}
\newcommand{\eseq}{e_1, \cdots, e_d}
\title{A NOTE ON TORSION BREUIL MODULES IN THE CASE $er=p-1$}
\author{HUI GAO}
\address{Beijing International Center for Mathematical Research, Peking University, No. 5 Yiheyuan Road, Haidian District, Beijing 100871, China}
\email{gaohui@math.pku.edu.cn}
\subjclass[2010]{Primary 11F80, 14F30}
\begin{document}
\maketitle

\pagestyle{myheadings}
\markright{A NOTE ON TORSION BREUIL MODULES IN THE CASE $er=p-1$}

\begin{abstract}
In this note, we prove that the category of unipotent torsion Breuil modules is an abelian category, under the condition $er=p-1, r<p-1$.
\end{abstract}
\tableofcontents

\section*{Introduction} \label{section intro}

Let $p$ be a prime, $k$ a perfect field of characteristic $p$, $W(k)$ the ring of Witt vectors, $K_0 = W(k)[\frac{1}{p}]$ the fraction field, $K$ a finite totally ramified extension of $K_0$, $e =e(K/K_0)$ the ramification index, $\overline K$ a fixed algebraic closure, and $G_K =\Gal(\overline{K}/K)$ the absolute Galois group. We will use $r \in \mathbb Z^{+}$ to denote an integer such that $er \leq p-1$. Many of the results in this note are valid for $er \leq p-1$, but we will be most interested in the case $er=p-1$.

Integral $p$-adic Hodge theory is about the study of integral lattices (that is, $\Zp$-lattices) in semi-stable Galois representations. A feature in integral $p$-adic Hodge theory is to use linear algebra data to study representations, and Breuil modules is one of the most useful linear algebra tool. Compared with the usual $p$-adic Hodge theory (that is, the study of $\Qp$-linear representations), one of the advantages of integral $p$-adic Hodge theory is that we can take reductions to study torsion phenomenon. In this note, we study torsion Breuil modules, and we prove that the category of \emph{unipotent} torsion Breuil modules is an abelian category when $er=p-1$ and $r<p-1$ (Let us mention here that the term ``unipotent" is a technical linear algebra condition).

Our result is known when $er \leq p-2$ by \cite{Car06}.
The main motivation for considering the categories in the current paper is to see if we can generalize the work of \cite{Car08} on torsion log-syntomic cohomologies to the case $er=p-1$.
However, during the work, we found out that the original approach of \textit{loc. cit.} cannot be generalized, so it is clear that we need some new ideas, and we hope we can return to the question in the future work. \footnote{In an earlier preprint version of this paper, arXiv:1410.3953, we have given a sketch of the reason why the work of \cite{Car08} cannot be generalized, see Section 3 of the arXiv version.}
As a final note, we hope that these results will be useful for studying (crystalline) Galois representations in the future.
In particular, these results should be useful for studying reductions of crystalline representations, which have potential applications to study deformation rings (and modularity lifting theorems).

Let us give a sketch of the strategy of the paper.
We first show that the subcategory consisting of $p$-torsion unipotent Breuil modules is abelian by proving that it is equivalent to another abelian category $\Mod_{k[u]/u^p}^{\phi, u}$ (see Section \ref{section mod with fil}).
Let us point out here that although it is relatively straightforward to define what it means for a $p$-torsion Breuil module to be unipotent, it is not so straightforward for a general $p^n$-torsion Breuil module; rather, we have to define it in an inductive manner.
Once we obtain the correct definition of unipotency, a d\'evissage argument will then prove our main result.

\textbf{Acknowledgement}: The author is very grateful to Tong Liu for initially suggesting these questions, and many useful discussions. This note is written when the author is a postdoc in Beijing International Center for Mathematical Research. The author would like to thank the institute for the hospitality, and Ruochuan Liu for being his postdoc mentor. This work is partially supported by China Postdoctoral Science Foundation General Financial Grant 2014M550539.

\textbf{Notations}: We use $[a_1, \cdots, a_d]$ to denote a diagonal matrix with the elements in the bracket. We use $A^T$ to denote the transpose of a matrix. In particular, $(e_1, \cdots, e_d)^T$ is a column vector. We use a boldface letter to mean a column vector, e.g., $\bolde$ or $\bolda$. We use notations like $\oplus R \bolde$ to denote the space of $R$-span of vectors in $\bolde$, e.g., if $\bolde=(e_1, \cdots, e_d)^T$, i.e., $\bolde$ is a column vector, then $\oplus R \bolde =\oplus_{i=1}^d Re_i$. When the ring $R$ is clear from the context, we can omit it and simply denote $\oplus \bolde$.

\section{Modules with filtrations} \label{section mod with fil}

In this section, we define certain categories of modules with filtrations, and prove that they are abelian categories under some conditions.

Let $\pi$ be a fixed uniformizer of $K$. Let $E(u) \in W(k)[u]$ be the minimal polynomial of $\pi$ over $K_0$, which is of degree $e$.
Recall that $S$ is the $p$-adic completion of the PD-envelope of $W(k)[u]$ with respect to the ideal $(E(u))$, which is a $W(k)[u]$-subalgebra of $K_{0}[\![u]\!]$, and $S = \{\sum_{i=0}^{\infty} a_i \frac{E(u)^i}{i!} | a_i \in W(k)[u], a_i \to 0 \ p-\text{adically}  \}$.
$S$ has a filtration $\{\Fil^{i}S\}_{i \geq 0}$, where $\Fil^{i}S$ is the $p$-adic completion of the ideal generated by all $\gamma_{j}(E(u))=\frac{E(u)^{j}}{j!}$ with $j\geq i$.
There is a Frobenius $\phi: S \to S$ which acts on $W(k)$ via Frobenius and sends $u$ to $u^p$, and a $W(k)$-linear differentiation $N$ (called the monodromy operator) such that $N(u) = -u$.
We denote $c =\frac{\phi(E(u))}{p}$, which is a unit in $S$. we also denote $S_n= S/p^nS$. Note that $\phi(\Fil^i S) \subseteq p^iS$ for $1 \leq i \leq p-1$, and we denote $\phi_i :=\frac{\phi}{p^i}: \Fil^i S \to S$ for $1 \leq i \leq p-1$.

Let $S_1=S/pS$, $\Fil^r S_1: =\Fil^r S+pS/pS \simeq \Fil^rS/p\Fil^rS$, and $\phi_r: \Fil^r S_1 \to S_1$ the map induced from $\phi_r: \Fil^r S \to S$.

Let $T_{ep}:=S_1/\Fil^pS_1 \simeq k[u]/u^{ep}$, $\Fil^r T_{ep}:=u^{er}\cdot k[u]/u^{ep}$, $\phi_r: \Fil^r T_{ep} \to T_{ep}$ with $\phi_r(u^{er}):= (\frac{\phi(E(u))-E(u)^p}{p} )^r (\bmod p)$.
Note that if $E(u)=u^e+pF(u)$, then $\frac{\phi(E(u))-E(u)^p}{p} (\bmod p) = \overline{\phi(F(u))}$, which is a unit in $T_{ep}$ because $F(0)$ is a unit in $W(k)$.

When $er \leq p-1$, $p \leq s \leq ep$, we define $T_s:=k[u]/u^s$, and let $\Fil^r T_s: =u^{er}\cdot k[u]/u^s$, and $\phi_r: \Fil^r T_s \to T_s$ such that $\phi_r(u^{er})=(\overline{\phi(F(u))}^r$.

When $er \leq p-1$ and $p \leq s \leq ep$, it can be easily checked that the following diagram is commutative,

$$\begin{CD}
\Fil^r S @>>> \Fil^r S_1  @>>>  (u^{er})/u^{ep} @>>>(u^{er})/u^s \\
@V\phi_rVV     @V\phi_rVV     @V\phi_rVV      @V\phi_rVV    \\
S@>>>    S_1 @>>>   T_{ep}=k[u]/u^{ep} @>>> T_s=k[u]/u^s.
\end{CD}$$
In the following, we will just denote $\overline{\phi(F(u))}$ by $c$ for simplicity.

\begin{defn}
For $er\leq p-1, p \leq s \leq ep$, the category $\Mod_{k[u]/u^s}^{\phi}$ consists of objects $(\M, \FM, \phi_r)$ where
\begin{itemize}
\item $\M$ is a finite free $T_s$-module.
\item $\FM$ is a submodule of $\M$ containing $u^{er}\M$.
\item $\phi_r: \FM \to \M$ is a map such that $\phi_r(ax)=\phi(a)\phi_r(x)$ for any $a \in T_s, x \in \FM$, and the image generates $\M$.
\end{itemize}
Morphisms in the category are $T_s$-homomorphisms that are compatible with filtrations and $\fr$.
\end{defn}

A sequence $0 \to \M_1 \to \M \to \M_2 \to 0$ in $\Mod_{k[u]/u^s}^{\phi}$ is called short exact if it is short exact as $T_s$-modules, and the sequence on filtrations $0 \to \FM_1 \to \FM \to \FM_2 \to 0$ is also short exact. In this case, we call $\M_2$ a quotient of $\M$.

The Cartier dual of $\M$ is defined by $\M^{\vee}:= \Hom_{T_s}(\M, T_s)$,
$$\Fil^r \M^{\vee} := \{ f \in \M^{\vee}, f(\Fil^r \M) \subseteq \Fil^r T_s \},$$
and
$$\varphi_r^{\vee} : \Fil^r \M^{\vee} \to \M^{\vee}, \text{for all } x \in \Fil^r \M, \varphi_r^{\vee}(f)(\varphi_r(x)) = \varphi_r(f(x)).$$
Note that $\varphi_r^{\vee}(f)$ is well defined since $\varphi_r(\Fil^r \M)$ generates $\M$.

\begin{lemma}\label{submodule}
\begin{enumerate}

\item Given a $k[u]/u^s$-module $\M$ of finite type, suppose it can be written as a direct sum $\M =\oplus_{i=1}^d T_s m_i$ where $m_i \neq 0$ ($\M$ is not necessarily free). Then for any submodule $\mathcal N$ of $\M$, we can choose some nonzero elements $e_i \in \M, 1 \leq i \leq d$ such that $\M =\oplus_{i=1}^d T_s e_i$ and $\mathcal N= \oplus_{i=1}^{d}T_s u^{x_i}e_i$ where $0 \leq x_i <s$. Note that here we allow $u^{x_i}e_i$ to be $0$.

\item Given a $k[u]/u^s$-module $\M$ of finite type, if $\M=\oplus_{i=1}^a \alpha_i = \oplus_{j=1}^b \beta_j$ where $\alpha_i, \beta_j \neq 0, \forall i, j$, then $a=b$.
\end{enumerate}
\end{lemma}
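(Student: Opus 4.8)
Throughout, write $R := T_s = k[u]/u^s$. This is a local Artinian principal ideal ring whose ideals form a single chain $(0)=(u^s)\subsetneq(u^{s-1})\subsetneq\dots\subsetneq(u)\subsetneq R$; its maximal ideal is $(u)$, its residue field is $k = R/uR$, and every finitely generated $R$-module is a finite direct sum of cyclic modules $R/u^j$. For a finitely generated $R$-module $P$ and $0\ne v\in P$ I write $\mu_P(v):=\max\{\, m\ge 0 : v\in u^m P\,\}$; this quantity does not depend on any choice of generators of $P$.

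For (2): in the conventions of the note, the decomposition $\M=\bigoplus_{i=1}^{a}\alpha_i$ is a direct sum of nonzero cyclic submodules $R\alpha_i$. Reducing modulo $u$ gives $\M/u\M=\bigoplus_{i=1}^{a}\alpha_i/u\alpha_i$, and each $\alpha_i/u\alpha_i$ is a cyclic $R/uR\cong k$-module, nonzero by Nakayama (since $\alpha_i\ne 0$ is finitely generated), hence one-dimensional over $k$. Therefore $a=\dim_k(\M/u\M)$, a quantity intrinsic to $\M$; the same computation applied to the second decomposition yields $b=\dim_k(\M/u\M)$, and so $a=b$.

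For (1): I would induct on $d$, quantifying over all pairs $(\M,\mathcal N)$ with $\M$ admitting a decomposition into $d$ nonzero cyclic summands. The case $d=1$ is immediate: every submodule of a cyclic module $T_sm_1\cong R/u^{a}$ is $u^{x}(R/u^{a})=T_su^{x}m_1$ for some $0\le x\le a\le s$. For the inductive step one may assume $\mathcal N\ne 0$ (otherwise take $e_i=m_i$). Set $x_1:=\min\{\,\mu_\M(v): 0\ne v\in\mathcal N\,\}$ and choose $v_1\in\mathcal N$ realizing this minimum, so that $v_1=u^{x_1}e_1$ with $e_1\notin u\M$. The plan is: (i) to extend $e_1$ to a decomposition $\M=T_se_1\oplus\M'$ with $\M'$ again a direct sum of $d-1$ nonzero cyclic submodules; (ii) to verify $\mathcal N=T_su^{x_1}e_1\oplus(\mathcal N\cap\M')$; and (iii) to apply the inductive hypothesis to $\mathcal N\cap\M'\subseteq\M'$ and reassemble. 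For (ii), the key observation is that for any $n\in\mathcal N$, writing $n=\lambda e_1+n'$ with $n'\in\M'$, one necessarily has $v_u(\lambda)\ge x_1$; otherwise the $T_se_1$-component of $n$ forces $\mu_\M(n)\le v_u(\lambda)<x_1$, contradicting the minimality of $x_1$ if $n\ne 0$ and contradicting directness of the sum if $n=0$. Hence $\lambda e_1\in T_su^{x_1}e_1=T_sv_1$, so $n\in T_sv_1+\M'$; and $T_su^{x_1}e_1\cap(\mathcal N\cap\M')\subseteq T_se_1\cap\M'=0$.

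The main obstacle is step (i). A primitive element of $\M$ need not by itself span a direct summand — for instance $(u,1)$ does not in $R/u^3\oplus R/u$ — so $T_se_1$ cannot be peeled off naively; one has to exploit that $e_1=v_1/u^{x_1}$ was chosen from $\mathcal N$ with $\mu_\M$ as small as possible, and one must pick the complement $\M'$ compatibly with $\mathcal N$. The route I would take is to descend to the discrete valuation ring $\Lambda:=k[\![u]\!]$, for which $R=\Lambda/u^s\Lambda$: lift a presentation $\pi\colon F:=\Lambda^d\twoheadrightarrow\M$, $f_i\mapsto m_i$, so that $K:=\ker\pi=\bigoplus_i\Lambda u^{a_i}f_i$ (with $T_sm_i\cong R/u^{a_i}$), and let $\widetilde{\mathcal N}\subseteq F$ be the preimage of $\mathcal N$. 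In the free module $F$ the chosen lift $g_1$ of $e_1$ does span a direct summand; the delicate point, where the argument ceases to be formal, is to use the minimality of $x_1$ to produce a $\Lambda$-complement $F'$ of $\Lambda g_1$ that splits $K$ and $\widetilde{\mathcal N}$ simultaneously — concretely, $F'=\ker\psi$ for an appropriate linear form $\psi\colon F\to\Lambda$ with $\psi(g_1)=1$ — after which one reduces modulo $K$ to recover (i). This last point — the simultaneously adapted complement — is a module-theoretic refinement of the classical elementary-divisor (``stacked basis'') theorem, and it is where I expect the bulk of the work to lie.
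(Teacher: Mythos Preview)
Your proof of (2) is correct and crisper than what the paper indicates (an induction on $\min\{a,b\}$): the identity $a=\dim_k(\M/u\M)=b$ via Nakayama settles it in one line.

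For (1) the paper gives no argument of its own --- it simply defers to Lemma~3.2.1 of \cite{Car06} --- so there is little to compare against directly. Your inductive plan is the natural one, and you have put your finger precisely on the obstruction at step~(i). The bad news is that this obstruction is genuine: the assertion, as stated for non-free $\M$, is \emph{false}, and your own example already witnesses it. Take $s\ge 3$, $\M=R/u^3\oplus R/u$, and $\mathcal N=R\cdot(\bar u,\bar 1)$. Your recipe gives $x_1=0$ and $e_1=(\bar u,\bar 1)$, which (as you observed) does not generate a direct summand --- but in fact \emph{no} choice works. Any decomposition $\M=Re_1\oplus Re_2$ has, up to order, $\mathrm{ann}(e_1)=(u^3)$ and $\mathrm{ann}(e_2)=(u)$; writing $e_1=(a,b)$, $e_2=(c,d)$ one finds $a\in(R/u^3)^\times$, $c\in u^2\cdot(R/u^3)$, and $d\ne 0$ (else $e_1,e_2$ do not span $\M/u\M$). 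Then $e_2\notin\mathcal N$, since every element of $\mathcal N=\{(\lambda_0 u+\lambda_1 u^2,\lambda_0)\}$ with first coordinate in $(u^2)$ has second coordinate $0$; this forces $x_2\ge 1$ and hence $\mathcal N=R\,u^{x_1}e_1$. But $ue_1=(ua,0)$ with $ua\notin(u^2)$, so $ue_1\notin\mathcal N$, and a length count rules out all other values of $x_1$. Thus no adapted decomposition exists, and the ``simultaneously adapted complement'' over $\Lambda$ that you hope to construct cannot exist either; the minimality of $x_1$ does not rescue step~(i).

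The lemma \emph{does} hold when $\M$ is free over $T_s$, and there your lift to $\Lambda=k[\![u]\!]$ works immediately: the preimage $\widetilde{\mathcal N}\supseteq u^s\Lambda^d$ is a submodule of the free module $\Lambda^d$, the ordinary elementary-divisor theorem over the PID $\Lambda$ produces a basis adapted to $\widetilde{\mathcal N}$, and one reduces modulo $u^s$. This free case covers almost every application in the paper; the few non-free invocations (e.g.\ to $\Fil^r\M_2$ in the proof of Theorem~1.0.13) would need separate justification.
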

\begin{proof}
Statement (1) can be proved similarly as Lemma 3.2.1 of \cite{Car06}. Statement (2) is easily deduced by an induction on $\textnormal{min} \{a, b\}$, using similar idea as the proof of (1).
\end{proof}

\begin{lemma}
The Cartier dual functor induces an anti-equivalence (thus a duality), and it transforms short exact sequences to short exact sequences.
\end{lemma}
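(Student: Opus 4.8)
The plan is to verify three things: (i) that the Cartier dual $\M^{\vee}$, with the filtration and $\varphi_r^{\vee}$ as defined, is again an object of $\Mod_{k[u]/u^s}^{\phi}$; (ii) that the natural biduality map $\M \to (\M^{\vee})^{\vee}$ is an isomorphism in the category; and (iii) that the functor takes a short exact sequence to a short exact sequence. Step (i) is where most of the work lies. Since $\M$ is finite free over $T_s = k[u]/u^s$, so is $\M^{\vee} = \Hom_{T_s}(\M, T_s)$, of the same rank; this is routine. The substantive point is that $\varphi_r^{\vee}(\Fil^r\M^{\vee})$ generates $\M^{\vee}$. To see this I would first use Lemma \ref{submodule}(1) applied to the submodule $\Fil^r\M \supseteq u^{er}\M$ of the free module $\M$: this produces a basis $\eseq$ of $\M$ with $\Fil^r\M = \oplus T_s u^{x_i} e_i$ where, because $u^{er}\M \subseteq \Fil^r\M \subseteq \M$, each exponent satisfies $0 \leq x_i \leq er$. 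The condition that $\varphi_r$ applied to $\Fil^r\M$ generates $\M$ forces $\varphi_r(u^{x_i}e_i)$ to be a basis of $\M$ (a Nakayama-type count, using Lemma \ref{submodule}(2) to match ranks). Dually, writing $\fseq$ for the dual basis of $\M^{\vee}$, one checks directly from the definitions that $\Fil^r\M^{\vee} = \oplus T_s u^{er - x_i} f_i$ and that $\varphi_r^{\vee}(u^{er-x_i} f_i)$ is, up to the unit $c = \overline{\phi(F(u))}$, the dual basis to $\varphi_r(u^{x_i}e_i)$, hence a basis of $\M^{\vee}$; in particular its image generates. The compatibility $\varphi_r^{\vee}(ax) = \phi(a)\varphi_r^{\vee}(x)$ is immediate from the defining formula.

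For step (ii), the evaluation pairing gives a canonical $T_s$-linear map $\M \to (\M^{\vee})^{\vee}$, which is an isomorphism of $T_s$-modules because $\M$ is finite free. Working in the adapted basis from step (i), one sees this map carries $\Fil^r\M$ onto $\Fil^r(\M^{\vee})^{\vee}$ and intertwines $\varphi_r$ with $(\varphi_r^{\vee})^{\vee}$ (the unit $c$ appears once in each direction and cancels), so it is an isomorphism in the category. Hence the functor is its own quasi-inverse up to natural isomorphism, which gives the anti-equivalence; contravariance on morphisms is built into the definition of $\M^{\vee}$ as a Hom.

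For step (iii), start from a short exact sequence $\Mexact$, which by definition means both $0 \to \M_1 \to \M \to \M_2 \to 0$ and $\FilrMexact$ are exact sequences of $T_s$-modules. Applying $\Hom_{T_s}(-, T_s)$ to the first sequence: since $\M_2$ is finite free, the sequence is split over $T_s$, so $0 \to \M_2^{\vee} \to \M^{\vee} \to \M_1^{\vee} \to 0$ is exact. It remains to show $0 \to \Fil^r\M_2^{\vee} \to \Fil^r\M^{\vee} \to \Fil^r\M_1^{\vee} \to 0$ is exact. Left-exactness and injectivity are formal from the definition of $\Fil^r$ on duals; the one real point is surjectivity onto $\Fil^r\M_1^{\vee}$, i.e. that every $f_1 \in \M_1^{\vee}$ with $f_1(\Fil^r\M_1) \subseteq \Fil^r T_s$ extends to some $f \in \M^{\vee}$ with $f(\Fil^r\M) \subseteq \Fil^r T_s$. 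Here I would use the $T_s$-linear splitting $\M \cong \M_1 \oplus \M_2$ together with the compatible splitting on filtrations coming from exactness of $\FilrMexact$ — concretely, choose the splitting so that $\Fil^r\M = \Fil^r\M_1 \oplus (\text{a lift of }\Fil^r\M_2)$ — and then extend $f_1$ by zero on the complement; the filtration condition is then inherited. The compatibility with $\varphi_r^{\vee}$ on the short exact sequence of duals follows from the already-verified compatibilities on $\M, \M_1, \M_2$.

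The main obstacle I anticipate is step (i): pinning down exactly why $\varphi_r^{\vee}(\Fil^r\M^{\vee})$ generates $\M^{\vee}$, which requires the careful choice of adapted basis via Lemma \ref{submodule}(1) and the bookkeeping of the exponents $x_i$ and $er - x_i$ together with the role of the unit $c$. Everything else is either a formal consequence of $T_s$-freeness (splittings, biduality of finite free modules) or an unwinding of the definitions.
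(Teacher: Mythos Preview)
Your proposal is correct and follows essentially the same route as the paper: use Lemma~\ref{submodule} to produce an adapted basis for $\Fil^r\M$ and then carry out the standard Cartier-dual computations (the paper simply cites Proposition~V.3.1.6 of \cite{Car05} for these details, which you have spelled out). The only place to be slightly more careful is in step~(iii), where the existence of a $T_s$-splitting of $\M$ simultaneously compatible with the filtration is not entirely formal; it follows from a refinement of the adapted-basis argument applied to the pair $(\M_1,\Fil^r\M)$ inside $\M$, but you should flag that this needs a line of justification rather than asserting it.
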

\begin{proof}
By Lemma \ref{submodule}, given any $\M \in \Mod_{k[u]/u^s}^{\phi}$, there is a ``base adapt\'{e}e" for $\Fil^r \M$. Then this proposition is similarly proved as Proposition V.3.1.6 of \cite{Car05}.
\end{proof}

For $\M \in \Modus$, let $(\phi^{\ast})^1\M=\phi^{\ast}\M$ be the $T_s$-span of $\phi_r(u^{er}\M)$, and inductively, define$(\phi^{\ast})^n\M$ as the $T_s$-span of $\phi_r(u^{er}(\phi^{\ast})^{n-1}\M)$. Let $\M^{\textnormal m}:= \cap_{n=1}^{\infty} (\phi^{\ast})^n \M$, and $\Fil^r \M^{\textnormal m} :=\M^{\textnormal m} \cap \Fil^r \M$.

\begin{prop} \label{prop max mult}
$\M^{\textnormal m}$ is a finite free $T_s$-module, and $\Fil^r \M^{\textnormal m}=u^{er}\M^{\textnormal m}$. Indeed, with the induced $\phi_r$-structure, $\M^{\textnormal m} \in \Modus$.
\end{prop}

\begin{proof}
$\M^{\textnormal m}$ is a submodule of $\M$, by Lemma \ref{submodule}, we can choose a basis $\eseq$ of $\M$, such that $\M^{\textnormal m}= \oplus_{i=1}^{a} T_s u^{x_i}e_i $, where $a
\leq d$, $0 \leq x_i <s$.
We claim that $x_i =0$ for all $i$. To prove the claim, suppose otherwise, then without loss of generality, we can assume $x_1 >0$ and is maximal among all $x_i$.
By the definition of $\M^{\textnormal m}$, $\phi_r(u^{er} \M^{\textnormal m})$ generates $\M^{\textnormal m}$, it means that
$(\phi_r(u^{er}u^{x_i}e_1), \cdots,\phi_r(u^{er}u^{x_a}e_a))^T = C (u^{x_i}e_1, \cdots, u^{x_a}e_a )^T$ for some invertible $a \times a$-matrix $C$. Now

\begin{eqnarray*}
(\phi_r(u^{er}u^{x_i}e_1), \cdots,\phi_r(u^{er}u^{x_a}e_a))^T &=& (u^{px_1}\phi_r(u^{er}e_1), \cdots, u^{px_a}\phi_r(u^{er}e_a))^T\\
&=& [u^{px_1}, \cdots, u^{px_a}] (A, B) (e_1, \cdots, e_a, \cdots, e_d)^T,
\end{eqnarray*}
where $A$ is an $a\times a$-matrix and $B$ is an $a\times (d-a)$-matrix.

Thus, we have $[u^{px_1}, \cdots, u^{px_a}]A =C [u^{x_1}, \cdots, u^{x_a}]$. With this equality, we can easily show that all elements in the first row of $C$ are divisible by $u$ (using that $x_1$ is nonzero and maximal among all $x_i$), which contradicts that $C$ is invertible! Thus our claim is proved, and $\M^{\textnormal m}$ is finite free.

Since we have already shown that $\M^{\textnormal m}$ is finite free, so we can choose a basis $(e_1, \cdots, e_a)$ of $\M^{\textnormal m}$ such that $\Fil^r \M^{\textnormal m}= \oplus_{i=1}^{a} T_s u^{x_i}e_i $ for some $0 \leq x_i \leq s$. Since clearly $\Fil^r \M^{\textnormal m} \supseteq u^{er}\M^{\textnormal m}$, we must have $0 \leq x_i \leq er$. We claim that $x_i=er$ for all $i$. To prove the claim, suppose otherwise, and we can assume $x_1<er$. Note that $u^{x_i}e_i \in \Fil^r \M$, so $\phi_r(u^{x_i}e_i) \in \M$ (not necessarily in $\M^{\textnormal m}$). Again we use the fact that $\phi_r(u^{er} \M^{\textnormal m})$ generates $\M^{\textnormal m}$. So

\begin{eqnarray*}
(\phi_r(u^{er}e_1), \cdots,\phi_r(u^{er}e_a))^T &=&
[u^{p(er-x_1)}, \cdots,u^{p(er-x_a)}] (\phi_r(u^{x_1}e_1), \cdots,\phi_r(u^{x_a}e_a))^T\\
&=& [u^{p(er-x_1)}, \cdots,u^{p(er-x_a)}] (A, B)(e_1, \cdots, e_a, \cdots, e_d)^T,
\end{eqnarray*}
where $A$ ia an $a\times a$-matrix and $B$ is an $a\times (d-a)$-matrix. Thus $[u^{p(er-x_1)}, \cdots,u^{p(er-x_a)}] A=C$ for some invertible $a\times a$-matrix $C$. But this is impossible, because all elements on the first row of $C$ will be divisible by $u$. So we have finished our proof.
\end{proof}

\begin{defn}
$\M \in \Modus$ is called multiplicative if $\Fil^r \M =u^{er}\M$, it is called \'{e}tale if $\Fil^r \M =\M$. It is called nilpotent if it has no nonzero multiplicative submodules, it is called unipotent if it has no nonzero \'{e}tale quotients.

We will use $\Mod_{k[u]/u^s}^{\phi,u}$ to denote the subcategory consisting of unipotent objects.
\end{defn}

\begin{prop}
For $\M \in \Mod_{k[u]/u^s}^{\phi}$,
\begin{enumerate}
\item We have short exact sequences
$$0 \to \M^{\textnormal{m}} \to \M \to \M^{\textnormal{nil}} \to 0$$
and
$$0 \to \M^{\textnormal{uni}} \to \M \to \M^{\textnormal{et}} \to 0,$$
where $\M^{\textnormal{m}}$ (resp. $\M^{\textnormal{nil}}, \M^{\textnormal{uni}}, \M^{\textnormal{et}}$) is a multiplicative (resp. nilpotent, unipotent, \'{e}tale) module. In fact, the second sequence is by taking Cartier dual of the first sequence, i.e., $\M^{\textnormal{uni}}=(\M^{\vee, \textnormal{nil}})^{\vee}$ and $\M^{\textnormal{et}}=(\M^{\vee, \textnormal{m}})^{\vee}$. Also, $\M^{\textnormal{m}}$ is the maximal multiplicative submodule of $\M$ (i.e., any multiplicative submodule of $\M$ is contained in $\M^{\textnormal{m}}$).

\item $\M$ is multiplicative (resp. nilpotent) if and only if $\M^{\vee}$ is \'{e}tale (resp. unipotent), and vice versa.
\end{enumerate}
\end{prop}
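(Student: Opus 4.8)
The plan is to construct the multiplicative--nilpotent sequence by hand, read off part~(2) from how $\Fil^r$ transforms under the Cartier dual, and then produce the unipotent--\'{e}tale sequence and its stated properties formally by dualizing. For the first sequence, put $\M^{\textnormal{nil}}:=\M/\M^{\textnormal{m}}$, with $\Fil^r\M^{\textnormal{nil}}$ the image of $\Fil^r\M$ and $\fr$ induced. The proposition above shows $\M^{\textnormal{m}}$ is finite free with $\Fil^r\M^{\textnormal{m}}=u^{er}\M^{\textnormal{m}}$. Since $\M$ has finite length over the Artinian ring $T_s$, the descending chain $(\phi^{\ast})^{1}\M\supseteq(\phi^{\ast})^{2}\M\supseteq\cdots$ stabilizes, say $\M^{\textnormal{m}}=(\phi^{\ast})^{N}\M$; hence the $T_s$-span of $\fr(u^{er}\M^{\textnormal{m}})$ equals $(\phi^{\ast})^{N+1}\M=\M^{\textnormal{m}}$, so $\fr$ restricts to $\M^{\textnormal{m}}$ with image generating it, and $\M^{\textnormal{m}}$ is a multiplicative object of $\Modus$. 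The inclusion $\M^{\textnormal{m}}\hookrightarrow\M$ is then a morphism, the sequence is short exact on modules, and it is short exact on filtrations because $\Fil^r\M^{\textnormal{m}}=\M^{\textnormal{m}}\cap\Fil^r\M$ by definition; the generation axiom for $\M$ descends to $\M^{\textnormal{nil}}$.

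The key elementary point for the rest is that any multiplicative subobject $\mathcal N$ of any $\mathcal P\in\Modus$ satisfies $\mathcal N=T_s\text{-span of }\fr(u^{er}\mathcal N)$, obtained by combining $\Fil^r\mathcal N=u^{er}\mathcal N$ with the generation axiom for $\mathcal N$. Taking $\mathcal P=\M$ and iterating gives $\mathcal N\subseteq(\phi^{\ast})^{n}\M$ for all $n$, hence $\mathcal N\subseteq\M^{\textnormal{m}}$, so $\M^{\textnormal{m}}$ is the maximal multiplicative submodule. If $\overline{\mathcal N}\subseteq\M^{\textnormal{nil}}$ were a nonzero multiplicative subobject, then its preimage $\mathcal N\subseteq\M$ (which contains $\M^{\textnormal{m}}$) would satisfy, using that $u^{er}\overline{\mathcal N}$ and $\fr(u^{er}\overline{\mathcal N})$ are the images of $u^{er}\mathcal N$ and $\fr(u^{er}\mathcal N)$ in $\M^{\textnormal{nil}}$, the relation $\mathcal N=(T_s\text{-span of }\fr(u^{er}\mathcal N))+\M^{\textnormal{m}}\subseteq(\phi^{\ast})^{1}\M$; iterating forces $\mathcal N\subseteq\M^{\textnormal{m}}$, i.e. $\overline{\mathcal N}=0$, a contradiction, so $\M^{\textnormal{nil}}$ is nilpotent.

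For part~(2): if $\M$ is multiplicative then $f(u^{er}\M)=u^{er}f(\M)\subseteq\Fil^rT_s$ for every $f\in\M^{\vee}$, so $\Fil^r\M^{\vee}=\M^{\vee}$ and $\M^{\vee}$ is \'{e}tale; conversely, a dual-basis computation shows that for \'{e}tale $\mathcal P$ one has $\Fil^r\mathcal P^{\vee}=u^{er}\mathcal P^{\vee}$, so $\mathcal P^{\vee}$ is multiplicative. Since the Cartier dual is an exact anti-equivalence with $\M^{\vee\vee}\cong\M$ (the lemma above), a nonzero multiplicative submodule of $\M$ dualizes to a nonzero \'{e}tale quotient of $\M^{\vee}$ and conversely, whence $\M$ is nilpotent iff $\M^{\vee}$ is unipotent; the ``vice versa'' assertions follow by applying these to $\M^{\vee}$. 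Finally set $\M^{\textnormal{uni}}:=(\M^{\vee,\textnormal{nil}})^{\vee}$ and $\M^{\textnormal{et}}:=(\M^{\vee,\textnormal{m}})^{\vee}$; dualizing $0\to\M^{\vee,\textnormal{m}}\to\M^{\vee}\to\M^{\vee,\textnormal{nil}}\to0$ and using $\M^{\vee\vee}\cong\M$ gives $0\to\M^{\textnormal{uni}}\to\M\to\M^{\textnormal{et}}\to0$, and part~(2) makes $\M^{\textnormal{et}}$ \'{e}tale and $\M^{\textnormal{uni}}$ unipotent. The step needing the most care is the nilpotency of $\M^{\textnormal{nil}}$: one must keep precise track of how $\Fil^r$ and $\fr$ behave on the quotient so that a multiplicative subobject of $\M^{\textnormal{nil}}$ is genuinely forced back into $\M^{\textnormal{m}}$.
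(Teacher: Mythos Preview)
Your approach is the same as the paper's (which in turn points to Theorem~2.3.7 of \cite{Gao13}): build the multiplicative--nilpotent sequence from $\M^{\textnormal m}$, then obtain the unipotent--\'{e}tale sequence by Cartier duality, with part~(2) as the bridge. Your treatment is in fact more detailed than the paper's, which simply asserts most of these points.

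There is, however, one genuine omission. You define $\M^{\textnormal{nil}}:=\M/\M^{\textnormal m}$ and proceed as if it were an object of $\Modus$, but you never verify that $\M^{\textnormal{nil}}$ is finite free over $T_s$. This is not automatic: $T_s=k[u]/u^s$ is not a PID, and a free submodule of a free $T_s$-module can easily have non-free quotient (e.g.\ $uT_s\subset T_s$). Knowing only that $\M^{\textnormal m}$ is finite free is not enough. What you need is the stronger fact, visible in the \emph{proof} of the previous proposition, that the basis $e_1,\dots,e_d$ of $\M$ can be chosen so that $\M^{\textnormal m}=\oplus_{i=1}^a T_s e_i$ (all exponents $x_i=0$); this exhibits $\M^{\textnormal m}$ as a direct summand and makes $\M^{\textnormal{nil}}=\oplus_{i=a+1}^d T_s e_i$ free. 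The paper invokes exactly this. Once you add that sentence, the rest of your argument (maximality of $\M^{\textnormal m}$, nilpotency of the quotient via pulling back a hypothetical multiplicative subobject, and the duality claims) goes through.
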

\begin{proof}
It is similar to Theorem 2.3.7 of \cite{Gao13}, and much easier. Let $\M^{\textnormal m}$ be as in Proposition \ref{prop max mult}. Then as in the proof of \textit{loc. cit.} shows, the quotient $\M^{\textnormal{nil}} :=\M /\M^{\textnormal m} =\oplus_{j=a+1}^d e_j$ is finite free. Define $\Fil^r \M^{\textnormal{nil}}:= \Fil^r \M / \Fil^r \M^{\textnormal m}$ (which injects into $\M/\M^{\rm m}$) with the induced $\phi_r$. $\M^{\textnormal{nil}}$ is clearly a nilpotent module in the category $\Mod_{k[u]/u^s}^{\phi}$. (2) is also easy to check.
\end{proof}

\begin{rem} \label{remark}
Similarly as in Remark 2.3.8 of \cite{Gao13}, suppose $\Fil^r \M =\oplus T_s\bolda$, and $\bolda = A \frac{\phi_r(\bolda)}{c^r}$, then $\M$ is unipotent if and only if $\Pi_{n=0}^{\infty}\phi^n(A)=0$.
\end{rem}

For $ep \geq t >s \geq p$, we can define a natural functor $\M_{t,s}$ from $\Modut$ to $\Modus$. For $\M \in \Modut$, let $\M_{t,s}(\M) := \M/u^s\M$, $\Fil^r \M_{t,s}(\M) := \Fil^r \M/u^s\M$ (which injects into $\M/u^s\M$), it is equipped with the induced $\phi_r$. Note that $\phi_r$ is well defined since $\phi_r(u^s\M)=\phi(u^{s-er})\phi_r(u^{er}\M)=u^{p(s-er)}\phi_r(u^{er}\M)=0$ because $p(s-er)\geq s$.

\begin{thm} \label{thm 4 list}
\begin{enumerate}

\item When $er=p-1$ and $ep \geq t >s >p$, $\M_{t,s}: \Mod_{k[u]/u^t}^{\phi} \to \Mod_{k[u]/u^s}^{\phi}$ is an equivalence.

\item When $er=p-1$ and $ep \geq t >p$, $\M_{t,p}:\Mod_{k[u]/u^t}^{\phi,u} \to \Mod_{k[u]/u^p}^{\phi,u}$ is an equivalence on the unipotent subcategories.

\item When $er \leq p-1$ and $ep \geq t >s \geq p$, $\M_{t,s}$ sends short exact sequences to short exact sequences.

\item When $er<p-1$ and $ep \geq t >s \geq p$, $\M_{t,s}: \Mod_{k[u]/u^t}^{\phi} \to \Mod_{k[u]/u^s}^{\phi}$ is an equivalence.
\end{enumerate}
\end{thm}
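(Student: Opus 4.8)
The plan is to prove all four parts of Theorem 1.0.9 together, exploiting the structural parallels between them. The functor $\M_{t,s}(\M) = \M/u^s\M$ is clearly well-defined (the check that $\phi_r$ descends is already in the text), additive, and the key point to establish is that it is fully faithful and essentially surjective in cases (1), (2), (4), and exact in case (3). For exactness (3), I would argue directly: if $0 \to \M_1 \to \M \to \M_2 \to 0$ is short exact in $\Mod_{k[u]/u^t}^{\phi}$, then $\M_1 \to \M$ is split as $T_t$-modules (Lemma \ref{submodule}), so tensoring with $T_t/u^s = T_s$ preserves injectivity; surjectivity and right-exactness are automatic; and for the filtration sequence $0 \to \Fil^r\M_1/u^s\M_1 \to \Fil^r\M/u^s\M \to \Fil^r\M_2/u^s\M_2 \to 0$ one uses that $\Fil^r\M_1 = \Fil^r\M \cap \M_1$ together with $u^s\M_1 = u^s\M \cap \M_1$ (which follows from the splitting and $s < t$), giving left-exactness, while right-exactness of the filtration sequence comes from right-exactness of the original filtration sequence.

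For the equivalences, the crucial numerical input is that when $er = p-1$ we have $p(s - er) = p(s - (p-1)) = ps - p^2 + p \geq s$ whenever $s \geq p$ (and strictly $> s$ when $s > p$), which is exactly what makes $\phi_r$ kill $u^s\M$ and, more importantly, what lets one reconstruct $\M$ from $\M/u^s\M$. The strategy for essential surjectivity is a lifting argument: given $\overline{\M} \in \Mod_{k[u]/u^s}^{\phi}$, choose a basis and write the $\phi_r$-structure via a matrix $\overline{A}$ over $T_s$ as in Remark \ref{remark} ($\bolda = \overline{A}\,\phi_r(\bolda)/c^r$ where $\Fil^r\overline{\M} = \oplus T_s\bolda$ after passing to a "base adaptée"); lift $\overline{A}$ arbitrarily to a matrix over $T_t$ and lift the filtration jumps $x_i$ (keeping $0 \le x_i \le er$); one must check that the resulting $(\M, \Fil^r\M, \phi_r)$ over $T_t$ is a valid object, i.e., that $\phi_r(\Fil^r\M)$ still generates $\M$ — this follows because generation is detected modulo the maximal ideal $(u)$, where $T_t$ and $T_s$ agree — and that $\M_{t,s}(\M) \cong \overline{\M}$. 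For full faithfulness, a morphism $f: \M_{t,s}(\M) \to \M_{t,s}(\M')$ is a matrix over $T_s$; lift it to a matrix $\tilde{f}$ over $T_t$; the lift automatically respects filtrations after adjusting (using $\Fil^r\M \supseteq u^{er}\M$ and $er \le p-1 \le s$), and the obstruction to $\tilde{f}$ commuting with $\phi_r$ lies in $\Hom$ of the quotient $u^s\M/u^t\M \cong u^s$-part, on which $\phi_r$ acts as multiplication by $u^{p(s-er)} \cdot (\text{unit})$ with $p(s-er) \geq s$, so the same rigidity forces the commutation to hold on the nose after a unique correction — i.e., one sets up a successive-approximation argument where each correction term is pinned down by the fact that $\phi$ is topologically nilpotent on the relevant truncated pieces.

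The distinction between the cases then comes down to when this lifting is \emph{rigid} (unique up to isomorphism). In case (4), $er < p-1$ gives $p(s-er) > s$ strictly (even for $s = p$, since then $p(s-er) = p(p - er) \ge 2p > p = s$ when $er \le p-2$; more carefully $p(s-er) \ge s$ with the gap growing), and more relevantly one gets enough room that the full subcategory — not just the unipotent part — is recovered. In case (1), the strict inequality $s > p$ is what is needed; when $s = p$ exactly and $er = p-1$, the functor $\M_{t,p}$ is \emph{not} an equivalence on the whole category (this borderline failure is presumably why case (2) restricts to unipotent objects), and there one must invoke Remark \ref{remark}: unipotence is the vanishing of $\prod_{n\ge 0}\phi^n(A)$, a condition that only depends on $A \bmod (u)$-adically in a way compatible with the truncation, so among unipotent objects the lift is still essentially unique. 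I expect the main obstacle to be precisely this borderline case $s = p$ in part (2): one must show that two lifts of the same unipotent object over $T_p$ to $\Mod_{k[u]/u^t}^{\phi, u}$ are isomorphic, which requires combining the successive-approximation/rigidity argument above with the unipotence criterion of Remark \ref{remark} to kill the one-dimensional family of non-isomorphic lifts that exists in the non-unipotent setting — equivalently, showing that the "extra" lifting parameter that appears when $p(s-er) = s$ is forced to a canonical value by the convergence requirement $\prod\phi^n(A) = 0$.
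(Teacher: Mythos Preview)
Your proposal is correct and follows essentially the same approach as the paper: the paper packages your ``successive-approximation argument'' into two explicit matrix lemmas (Lemmas~\ref{lemma1} and~\ref{lemma2}), which give a closed-form lift $\hat X = X + u(u^{s-p}QB_2 + \sum_{n\ge 0}\prod_{i=0}^n\phi^i(A_1)\,\phi^{n+1}(u^{s-p}Q)\prod_{j=n+1}^0\phi^j(B_2))$ and a uniqueness statement, each with the two hypotheses ``$s>p$'' or ``$s=p$ and $\prod\phi^i(A_1)=0$'' exactly as you anticipated; essential surjectivity and part~(3) are handled just as you describe. One small correction: in your essential-surjectivity step the nontrivial check is not that $\phi_r(\Fil^r\M_t)$ generates $\M_t$ (that is automatic if you \emph{define} $\phi_r(\hat\alpha_i):=\hat e_i$), but rather that $u^{er}\M_t\subseteq\Fil^r\M_t$, which the paper secures by adjusting the lifted $\hat B$ so that $\hat B\hat A=u^{p-1}Id$ exactly over $T_t$.
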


Before proving the theorem, we list several lemmas.
\begin{lemma} \label{lemma0}
\begin{enumerate}
\item Given $\M \in \Modus$, we can choose a basis $(e_1, \cdots, e_d)$ of $\M$ such that $\FM = \oplus T_s \bolda=\oplus_{i=1}^d \alpha_i =\oplus_{i=1}^d u^{x_i}e_i$. Let $\bolda=A( \frac{\phi_r(\bolda)}{c^r})$. Then there exists a matrix $B$ such that $AB=BA=u^{er}Id$.

\item Given $\M_1, \M_2 \in \Modus$, let $A_1, A_2$ be the matrices constructed as in (1), then a morphism $f: \M_1 \to \M_2$ is determined by some matrix $X$ such that $A_1\phi(X) = XA_2$. $X$ is not uniquely determined, but $\phi(X)$ is uniquely determined.
\end{enumerate}
\end{lemma}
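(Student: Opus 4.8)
The plan for (1) is as follows. First I would apply Lemma~\ref{submodule}(1) to the free module $\M$ to get a basis $\bolde=(e_1,\dots,e_d)^T$ for which $\FM=\oplus_{i=1}^d T_s u^{x_i}e_i$ with $0\le x_i<s$. Reading the inclusion $u^{er}\M\subseteq\FM$ off this direct-sum decomposition forces $x_i\le er$, and since $er\le p-1<p\le s$ each $\alpha_i:=u^{x_i}e_i$ is then nonzero; put $\bolda=(\alpha_1,\dots,\alpha_d)^T$. The point to establish is that $\fr(\bolda)$ is itself a basis of $\M$: by the defining property of $\Modus$ its $T_s$-span is all of $\M$, and $d$ generators of a free $T_s$-module of rank $d$ (over the local ring $T_s=k[u]/u^s$) necessarily form a basis. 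Hence $\bolde':=\fr(\bolda)/c^r$ is a basis too ($c$ being a unit), which produces both a unique $A$ with $\bolda=A\bolde'$ and the automatically invertible change-of-basis matrix $P$ with $\bolde=P\bolde'$. From $\bolda=[u^{x_1},\dots,u^{x_d}]\,\bolde$ one reads off $A=[u^{x_1},\dots,u^{x_d}]\,P$, so I would simply take
\[
B:=P^{-1}\,[u^{er-x_1},\dots,u^{er-x_d}]
\]
(legitimate since $0\le er-x_i\le er<s$); then $AB=[u^{x_1},\dots,u^{x_d}]\,[u^{er-x_1},\dots,u^{er-x_d}]=u^{er}\,Id$ and $BA=P^{-1}[u^{er},\dots,u^{er}]\,P=u^{er}\,Id$.

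For (2), fix $\M_1,\M_2$ with the data $\bolde_j$, $\bolda_j=[u^{x_{j,1}},\dots,u^{x_{j,d}}]\,\bolde_j$, $\bolde'_j:=\fr(\bolda_j)/c^r$ and $A_j$ produced in (1), and let $f\colon\M_1\to\M_2$ be a morphism. Since $f$ respects filtrations, $f(\bolda_1)\in\FM_2=\oplus T_s\bolda_2$, so $f(\bolda_1)=X\bolda_2$ for some $X\in\Mat_d(T_s)$. I would then apply $\fr$ to this identity and use $\phi$-semilinearity together with $f\circ\fr=\fr\circ f$ on $\FM_1$ to obtain $f(\bolde'_1)=\phi(X)\bolde'_2$; finally, applying the $T_s$-linear $f$ to $\bolda_1=A_1\bolde'_1$ gives $f(\bolda_1)=A_1\phi(X)\bolde'_2$, and comparing with $f(\bolda_1)=X\bolda_2=XA_2\bolde'_2$ — using that $\bolde'_2$ is a basis — yields $A_1\phi(X)=XA_2$. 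Conversely, any $X$ with $A_1\phi(X)=XA_2$ visibly defines a morphism through $f(\bolde'_1):=\phi(X)\bolde'_2$, so this genuinely describes $\Hom(\M_1,\M_2)$.

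It remains to address the (non-)uniqueness of $X$. If $X\bolda_2=X'\bolda_2$, then for each $(i,j)$ the directness of $\FM_2=\oplus_j T_s\alpha_{2,j}$ forces $(X-X')_{ij}\alpha_{2,j}=0$, i.e.\ $(X-X')_{ij}\in u^{s-x_{2,j}}T_s$, which is a genuine ambiguity when $x_{2,j}>0$. But $\phi$ carries $u^{s-x_{2,j}}T_s$ into $u^{p(s-x_{2,j})}T_s$, and since $x_{2,j}\le er\le p-1$ one has $p(s-x_{2,j})\ge p(s-er)\ge s$, the last inequality being $s\ge per/(p-1)$, which holds because $er\le p-1$ and $s\ge p$. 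Hence $\phi(X-X')=0$ in $\Mat_d(T_s)$, so $\phi(X)$ is well defined.

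I do not expect a serious obstacle here: part (1) is elementary matrix manipulation resting on Lemma~\ref{submodule} and Nakayama, and the forward part of (2) is bookkeeping with $\phi$-semilinearity. The one place where something beyond linear algebra enters is the final step, where the inequality $p(s-er)\ge s$ — equivalent, under $er\le p-1$, to $s\ge p$ — is precisely what makes $\phi$ annihilate the indeterminacy in $X$; this is exactly where the standing hypothesis $er\le p-1$ is used.
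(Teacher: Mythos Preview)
Your argument is correct and follows the same route as the paper: for (1) you obtain the factorization $A=[u^{x_1},\dots,u^{x_d}]P$ with $P$ invertible and set $B=P^{-1}[u^{er-x_1},\dots,u^{er-x_d}]$, exactly as in the paper; for (2) the paper merely writes ``choose a matrix $X$ such that $f(\bolda)=X\boldb$'' and stops, whereas you supply the full verification of $A_1\phi(X)=XA_2$ and of the uniqueness of $\phi(X)$ via the inequality $p(s-er)\ge s$, which the paper leaves implicit.
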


\begin{proof}
For (1), the existence of a matrix $C$ (not unique) such that $CA=u^{er}Id$ is clear because $\FM \supseteq u^{er}\M$, but it does not guarantee $AC=u^{er}Id$ because $u$ is not an integral element in $k[u]/u^s$.
In our situation, $(u^{x_1}e_1, \cdots, u^{x_d}e_d)^T = A(\frac{\phi_r(\bolda)}{c^r})$. So we have $[u^{x_1}, \cdots, u^{x_d}](e_1, \cdots, e_d)^T=A\frac{\phi_r(\bolda)}{c^r}$. So $A=[u^{x_1}, \cdots, u^{x_d}]P$, where $P$ is the invertible matrix such that $(e_1, \cdots, e_d)^T=P\frac{\phi_r(\bolda)}{c^r}$. Thus we can take our $B=P^{-1}[u^{er-x_1}, \cdots, u^{er-x_d}]$.

For (2), if we make $\Fil^r \M_1=\oplus T_s\bolda, \Fil^r \M_2=\oplus T_s \boldb$ for some $\bolda$ and $\boldb$, then we can choose a matrix $X$ such that $f(\bolda)=X\boldb$.
\end{proof}

\begin{lemma}\label{lemma1}
Suppose $ep \geq t>s \geq p$. Let $A_1, A_2, B_1, B_2, X, Q$ be matrices with coefficients in $k[u]/u^t$ such that $B_1A_1 =u^{p-1}Id$ and $B_2A_2 =u^{p-1}Id$, and we have a relation $A_1\phi(X) -XA_2 = u^sQ$.
Then there exists a matrix $\hat{X}$ with coefficients in $k[u]/u^t$ such that $A_1 \phi(\hat{X})=\hat{X}A_2$ and $\phi(\hat X) \equiv \phi(X) (\bmod u^s)$, if either of the following conditions is satisfied:
\begin{enumerate}
\item $s>p$.
\item $s=p$ and $\prod_{i=0}^{\infty} \phi^i(A_1)= 0$.
\end{enumerate}
\end{lemma}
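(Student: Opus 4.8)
The plan is to construct $\hat X$ as a convergent correction series $\hat X = X + u^s Y_1 + u^{s+?}Y_2 + \cdots$, or rather to work with $\phi(\hat X)$ directly, since only $\phi(\hat X)$ is well-defined and that is what the conclusion asserts. Write the defect as $A_1\phi(X) - XA_2 = u^s Q$. I would look for a correction $X' = X + Z$ with $Z \equiv 0 \pmod{u^?}$ improving the defect. Since $\phi$ raises $u$-adic valuation by a factor of $p$, a correction of the form $Z = u^m W$ contributes $A_1\phi(u^m W) = u^{pm}A_1\phi(W)$ to the left side and $-u^m W A_2$ to the right. The key is to use the matrix $B_i$ with $B_iA_i = u^{p-1}\mathrm{Id}$: multiplying the defect equation on the left by $B_1$ gives $u^{p-1}\phi(X) - B_1 X A_2 = u^s B_1 Q$, and similarly one can solve for a correction by multiplying by $B_2$ on the right. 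This is the standard "successive approximation / dévissage in the $u$-adic filtration" technique.

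**The iteration.**

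Concretely, I would set $\hat X_0 = X$ and inductively build $\hat X_{n}$ with $A_1\phi(\hat X_n) - \hat X_n A_2 = u^{s_n}Q_n$ where $s_0 = s$ and the exponents $s_n$ strictly increase. Given the defect $u^{s_n}Q_n$, try the correction $\hat X_{n+1} = \hat X_n + u^{s_n - (p-1)}B_1 Q_n$ (this requires $s_n \geq p-1 = er$, which holds since $s_n \geq s \geq p > p-1$). Then the new defect becomes, using $A_1\phi(B_1 Q_n) = A_1 B_1 \phi(Q_n)\cdot(\text{adjust})$ — more carefully, $A_1\phi(u^{s_n-(p-1)}B_1Q_n) = u^{p(s_n-(p-1))}A_1\phi(B_1)\phi(Q_n)$, and on the other side $-u^{s_n-(p-1)}B_1Q_nA_2$. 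Here the term $u^{s_n-(p-1)}B_1 Q_n A_2 = u^{s_n - (p-1)}B_1 Q_n A_2$, and since we want this to cancel $u^{s_n}Q_n$ we'd rather multiply on the right: take $\hat X_{n+1} = \hat X_n - u^{s_n-(p-1)}Q_n B_2$. Then $-\hat X_{n+1}A_2 = -\hat X_n A_2 + u^{s_n-(p-1)}Q_n B_2 A_2 = -\hat X_n A_2 + u^{s_n}Q_n$, which cancels the defect, and the new defect is $A_1\phi(u^{s_n-(p-1)}Q_nB_2) = u^{p(s_n-(p-1))}A_1\phi(Q_n B_2)$. Since $p(s_n - (p-1)) = ps_n - p(p-1) > s_n$ precisely when $(p-1)s_n > p(p-1)$, i.e. $s_n > p$ — this is exactly where case (1)'s hypothesis $s > p$ enters! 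So $s_{n+1} := p(s_n - (p-1)) > s_n$, the exponents strictly increase, and since everything lives in $k[u]/u^t$ the process terminates after finitely many steps once $s_n \geq t$, yielding an exact $\hat X := \hat X_N$. Crucially, each correction term $u^{s_n - (p-1)}Q_nB_2$ has $u$-valuation $\geq s - (p-1) \geq 1$; but we need $\phi(\hat X) \equiv \phi(X) \pmod{u^s}$, and $\phi$ of the first correction has valuation $p(s-(p-1)) \geq s$ (again by $s > p$... wait, $p(s-p+1) \geq s \iff s \geq p^2/(p-1)$, so actually one needs $s \geq p+1$, consistent with $s > p$), so indeed all corrections vanish mod $u^s$ after applying $\phi$.

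**Case (2) and the main obstacle.**

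When $s = p$ the iteration above fails at the first step because $p(s - (p-1)) = p(p - p + 1) = p = s$, so the exponent does not increase. This is exactly why the extra hypothesis $\prod_{i=0}^\infty \phi^i(A_1) = 0$ (the unipotence condition, cf. Remark 1.0.7) is needed. Here I expect the main difficulty. The idea is that with $s = p = er + 1$, the defect equation modulo $u^p$ essentially lives in degree-$(p-1)$-and-up, and iterating the correction $\hat X_{n+1} = \hat X_n - u^{s_n - (p-1)}Q_nB_2$ produces a defect governed by repeated application of $\phi$ to products involving $A_1$ (through $\phi(Q_n B_2)$ and the structure $A_1\phi(\cdots)$). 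Tracking this, the $n$-th defect coefficient acquires a factor of the form $A_1\phi(A_1)\phi^2(A_1)\cdots$, and the hypothesis $\prod \phi^i(A_1) = 0$ — which in $k[u]/u^t$ means this product is eventually $0$ once enough factors accumulate (each $\phi^i(A_1)$ for large $i$ is divisible by a high power of $u$, or the finite product vanishes outright) — forces the correction series to stabilize. So the plan for (2) is: run the same successive-approximation scheme, but bookkeep the accumulated product of Frobenius-twists of $A_1$ in the defect term, and invoke $\prod_{i=0}^\infty \phi^i(A_1) = 0$ to conclude termination. The delicate point is verifying that the product that actually shows up in the defect after $n$ steps is genuinely (conjugate/equal to) $A_1\phi(A_1)\cdots\phi^{n-1}(A_1)$ up to units and right-multiplication by bounded matrices, rather than some rearrangement for which the vanishing hypothesis gives no control; this combinatorial tracking of the iteration is where I would spend the most care.
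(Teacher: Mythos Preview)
Your approach is correct and is essentially the same as the paper's: the paper simply writes down the closed-form correction
\[
\hat X = X + u\Bigl(u^{s-p}QB_2 + \sum_{n=0}^{\infty} \Bigl(\prod_{i=0}^{n}\phi^i(A_1)\Bigr)\,\phi^{n+1}(u^{s-p}Q)\,\phi^{n+1}(B_2)\cdots\phi(B_2)B_2\Bigr),
\]
which is exactly the series your iteration $\hat X_{n+1}=\hat X_n + u^{s_n-(p-1)}Q_nB_2$ produces. Your concern in case~(2) is unfounded: from $Q_{n+1}=A_1\phi(Q_n)\phi(B_2)$ one gets directly by induction
\[
Q_n = A_1\phi(A_1)\cdots\phi^{n-1}(A_1)\,\phi^n(Q)\,\phi^n(B_2)\cdots\phi(B_2),
\]
so the factor $\prod_{i=0}^{n-1}\phi^i(A_1)$ appears on the nose with no rearrangement, and the hypothesis $\prod_{i=0}^{\infty}\phi^i(A_1)=0$ kills $Q_n$ for large $n$. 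One minor slip: the inequality $p(s-p+1)\geq s$ is equivalent to $(p-1)s\geq p(p-1)$, i.e.\ $s\geq p$, not $s\geq p^2/(p-1)$; this is why in case~(2) ($s=p$) one has $\phi$ of each correction divisible by exactly $u^p=u^s$, which still suffices for $\phi(\hat X)\equiv\phi(X)\pmod{u^s}$.
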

\begin{proof}
Let $$\hat{X}  =X+ u(u^{s-p}QB_2+   \sum_{n=0}^{\infty}( \prod_{i=0}^n \phi^i(A_1) \phi^{n+1}(u^{s-p}Q)\prod_{j=n+1}^{0}\phi^{j}(B_2) )). $$ Here $\prod_{j=n+1}^{0}\phi^{j}(B_2)$ means $\phi^{n+1}(B_2)\phi^{n}(B_2)\cdots \phi(B_2)B_2$.
\end{proof}

\begin{lemma}\label{lemma2}
Suppose $ep \geq t>s \geq p$. Let $A_1, A_2, B_1, B_2, X$ be matrices with coefficients in $k[u]/u^t$ such that $B_1A_1 =B_2A_2=u^{p-1}Id$, and we have a relation $A_1 \phi(X)=XA_2$. Suppose $\phi(X) \equiv 0 (\bmod u^s)$, then $\phi(X)=0$ if either of the following conditions is satisfied:
\begin{enumerate}
\item $s>p$.
\item $s=p$ and $\prod_{i=0}^{\infty} \phi^i(A_1)= 0$.
\end{enumerate}
\end{lemma}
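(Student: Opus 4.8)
The plan is to control the $u$-adic order of $\phi(X)$ by a single bootstrapping step that works in both cases, and then, in the harder case $s=p$ where that step gains nothing, to replace it by unrolling a recursion that terminates thanks to the unipotence hypothesis $\prod_{i\ge 0}\phi^i(A_1)=0$.

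In both cases I would first write $\phi(X)=u^{s}Y$ for some $Y\in\Mat_d(k[u]/u^t)$, possible since $\phi(X)\equiv 0\pmod{u^{s}}$. From $A_1\phi(X)=XA_2$, multiplying on the right by $B_2$ — here I use that the $B_i$ may be taken two-sided, $A_iB_i=B_iA_i=u^{p-1}Id$, as produced by Lemma~\ref{lemma0}(1) — gives $u^{p-1}X=u^{s}A_1YB_2$. Since $s\ge p-1$ and the annihilator of $u^{p-1}$ in $k[u]/u^t$ is $u^{t-p+1}k[u]/u^t$, this forces $X=u^{\,s-p+1}A_1YB_2+u^{\,t-p+1}Z$ for some $Z$. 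Applying $\phi$ and noting $p(t-p+1)\ge t$ (equivalently $t\ge p$, which holds since $t>s\ge p$) kills the last term, leaving $\phi(X)=u^{\,p(s-p+1)}\phi(A_1)\phi(Y)\phi(B_2)$, hence $\phi(X)\equiv 0\pmod{u^{\,p(s-p+1)}}$.

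For statement (1), $s>p$, one has $p(s-p+1)-s=(p-1)(s-p)>0$, so the order of $\phi(X)$ strictly increases; re-running the previous paragraph with $s$ replaced by $p(s-p+1)$ (still $>p$) and iterating, the order exceeds $t$ after finitely many steps, so $\phi(X)=0$. For statement (2), $s=p$, the same computation only gives $p(s-p+1)=p$ and no order is gained. Instead I would use that $\phi(X)=u^{p}Y$ and $\phi(X)=u^{p}\phi(A_1)\phi(Y)\phi(B_2)$ together yield $u^{p}\bigl(Y-\phi(A_1)\phi(Y)\phi(B_2)\bigr)=0$, i.e. $Y=\phi(A_1)\phi(Y)\phi(B_2)+u^{\,t-p}W$ for some $W$. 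Unrolling this identity (substituting $\phi^{k}$ of it into the occurrence of $\phi^{k}(Y)$), one sees that after $n$ steps $Y$ equals $\bigl(\prod_{i=1}^{n}\phi^{i}(A_1)\bigr)\phi^{n}(Y)\bigl(\prod_{j=n}^{1}\phi^{j}(B_2)\bigr)$ plus a finite sum of correction terms, the one indexed by $k$ being divisible by $u^{\,p^{k}(t-p)}$, hence by $u^{\,t-p}$. Since $\prod_{i=1}^{n}\phi^{i}(A_1)=\phi\bigl(\prod_{i=0}^{n-1}\phi^{i}(A_1)\bigr)$ and the inner product vanishes once $n-1$ is large enough (which is what $\prod_{i\ge 0}\phi^{i}(A_1)=0$ means in $k[u]/u^t$), choosing such an $n$ gives $Y\equiv 0\pmod{u^{\,t-p}}$, whence $\phi(X)=u^{p}Y=0$.

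The main obstacle is exactly case (2). The naive order bootstrap is useless when $s=p$, and the obvious alternative — iterating $A_1\phi(X)=XA_2$ into $X\prod_{i=0}^{n}\phi^{i}(A_2)=\bigl(\prod_{i=0}^{n}\phi^{i}(A_1)\bigr)\phi^{n+1}(X)$ and then killing the right-hand side by unipotence — only produces relations of the form $u^{\,p^{n+1}-1}X=0$, which are vacuous because $\prod_{i=0}^{n}\phi^{i}(A_1)$ typically first vanishes only when $p^{n+1}-1\ge t$ (already for a rank-one multiplicative module). The key is to run the recursion on $Y=\phi(X)/u^{p}$ rather than on $X$, so that the unipotence product appears as a \emph{left} factor of the term that survives the iteration; that term then dies after finitely many steps, while every accumulated correction term is automatically divisible by $u^{\,t-p}$, which suffices since $\phi(X)=u^{p}Y$. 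Carrying out the bookkeeping of this unrolled recursion — tracking which products vanish and the $u$-divisibility of each correction — is where the real work lies.
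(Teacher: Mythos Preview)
Your argument is correct and essentially parallels the paper's: the paper writes $X=uY$ (so your $Y$ is $\phi$ of theirs), obtains the same recursion $Y=A_1\phi(Y)B_2+u^{t-p}Q$ from $u^p(A_1\phi(Y)B_2-Y)=0$, and uses the unipotence hypothesis to see that the unique solution is divisible by $u^{t-p}$, whence $\phi(X)=0$. The only cosmetic difference is that for case~(1) the paper extracts $X=u^{\lceil s/p\rceil}Y$ in one shot rather than iterating your bootstrap, but the content is the same.
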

\begin{proof}
For the second condition, since $\phi(X) \equiv 0 (\bmod u^p)$, we can let $X =uY$, then $A_1 u^p \phi(Y) =uYA_2$. Multiply $B_2$ on both sides, then we have $u^p(A_1\phi(Y)B_2 -Y)=0$, so $A_1\phi(Y)B_2 -Y = u^{t-p}Q$ for some matrix $Q$. We claim that the matrix equation $A_1\phi(Y)B_2 -Y = u^{t-p}Q$ with indeterminate $Y$ has a unique solution. Suppose we have two solutions $Y_1, Y_2$, and let $Z=Y_1 -Y_2$, then $Z=A_1\phi(Z)B_2$, so
$Z=A_1\phi(Z)B_2= A_1\phi(A_1)\phi^2(Z)\phi(B_2)B_2 = \cdots =0$ because $\prod_{i=0}^{\infty} \phi^i(A_1)= 0$. And the unique solution is
$$Y = u^{t-p}Q +  \sum_{n=0}^{\infty}  (\prod_{i=0}^{n} \phi^i(A_1) \phi^{n+1}(u^{t-p}Q)  \prod_{j=n}^{0} \phi^j(B_2) ).$$
So we have $Y=u^{t-p}W$. Now $X=u^{t-p+1}W$, so $u^{p(t-p+1)}$ divides $\phi(X)$. Since $p(t-p+1) \geq t$, so $u^t$ divides $\phi(X)$.

For the first condition, we can set $X=u^{\lceil \frac{s}{p} \rceil}Y$, and then it follows from a similar argument as above.
\end{proof}

\begin{proof}(Proof of Theorem \ref{thm 4 list})
We first prove essential surjectivity of both statement (1) and (2). Given $\M_s \in \Modus$ for $s \geq p$, suppose $\FM_s = \oplus_{i=1}^d T_s\alpha_i$, and $(\alphaseq)^T =A( \phi_r(\alpha_1), \cdots, \phi_r(\alpha_d) )^T =A (\eseq)^T$, then there exists $B$ such that $BA=u^{p-1}Id$. Take any lift $\hat{A}, \hat{B}$ of $A, B$ respectively with elements in $k[u]/u^t$, then $\hat{B}\hat{A} = u^{p-1}Id+u^s Q$ for some $Q$, so $[(Id+ u^{s-p+1}Q)^{-1}\hat{B}](\hat{A})=u^{p-1}$. Now define $\M_t = \oplus_{i=1}^d T_t \hat{e_i}$, $\Fil^r \M_t =\sum_{i=1}^d T_t (\hat{\alpha_i})$ with $(\hat{\alpha_1}, \cdots, \hat{\alpha_d})^T = \hat{A} (\hat{e_1}, \cdots, \hat{e_d})^T$, and $\phi_r(\hat{\alpha_i}) =\hat{e_i}$. Then clearly $\M_t$ is a preimage of $\M_s$.

Now we prove the full faithfulness of both statement (1) and (2). Let $\M_{t_1}, \M_{t_2} \in \Mod_{k[u]/u^t}$, and $\M_{s_1}= \M_{t,s}(\M_{t_1}), \M_{s_2}= \M_{t,s}(\M_{t_2})$. We need to prove that $h: \Hom(\M_{t_1}, \M_{t_2}) \to \Hom(\M_{s_1}, \M_{s_2})$ is bijective.
Let $A_1, A_2$ be the matrix for $\M_{t_1}, \M_{t_2}$ respectively as in Lemma \ref{lemma0}, then $A_{1,s} \equiv A_1 \bmod u^s, A_{2,s}\equiv A_2\bmod u^s$ are the matrix for $\M_{s_1}, \M_{s_2}$. To show surjectivity of $h$, given any morphism in $\Hom(\M_{s_1}, \M_{s_2})$ is equivalent to a matrix $X_s \in \Mat(k[u]/u^s)$ such that $A_{1, s}\phi(X_s) =X_sA_{2,s}$, now lift $X_s$ to any $X \in \Mat(k[u]/u^t)$, so $A_1\phi(X) -XA_2 = u^sQ$, and we can apply Lemma \ref{lemma1} to conclude surjectivity.
To show injectivity of $h$, suppose a morphism in $\Hom(\M_{t_1}, \M_{t_2})$ maps to $0$. This morphism is determined by some $X$ as in the conditions of Lemma \ref{lemma2}, so the morphism is itself $0$.

For (3), given a short exact sequence $0 \to \M_1 \to \M \to \M_2 \to 0$ in $\Mod_{k[u]/u^t}^{\phi}$, since these are finite free modules, it is clear that $0 \to \M_1/u^s\M_1 \to \M/u^s\M \to \M_2/u^s\M_2 \to 0$ is still short exact.
For the filtration sequence, $\Fil^r\M_1/u^s\M_1 \to \Fil^r\M/u^s\M \to \Fil^r\M_2/u^s\M_2$, the second map is clear surjective.
Since $u^s\M_1 \subseteq u^s\M \cap \Fil^r \M_1 \subseteq  u^s\M \cap \M_1 =u^s\M_1$, so $u^s\M \cap \Fil^r \M_1 =u^s \M_1$, and the first map is injective. Exactness in the center is also easily checked.

For (4), it can be similarly proved as (1) and (2) by modifying Lemma \ref{lemma1} and Lemma \ref{lemma2}(which in fact becomes easier).

\end{proof}

\begin{lemma} \label{unip}
Given a short exact sequence $0 \to \M_1 \to \M \to \M_2 \to 0$ in $\Modus$, then $\M$ is unipotent if and only if both $\M_1$ and $\M_2$ are unipotent.
\end{lemma}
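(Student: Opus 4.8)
The plan is to characterize unipotence via the criterion of Remark \ref{remark}: if $\Fil^r\M = \oplus T_s\bolda$ with $\bolda = A\frac{\phi_r(\bolda)}{c^r}$, then $\M$ is unipotent if and only if $\prod_{n=0}^\infty \phi^n(A) = 0$ (equivalently, $\phi^N(A)\cdots\phi(A)A = 0$ for some $N$, since everything is over a ring $k[u]/u^s$ where a product of matrices all of whose entries have positive $u$-valuation eventually vanishes). So the real content is a bookkeeping statement about how the defining matrices of $\M_1$, $\M$, $\M_2$ fit together in a block-triangular form, together with a divisibility argument showing the big product vanishes iff both diagonal blocks' products do.

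First I would use Lemma \ref{submodule}(1) and the short exactness of the filtration sequence $0 \to \FM_1 \to \FM \to \FM_2 \to 0$ to choose compatible bases: a basis $\bolde$ of $\M$ adapted to $\M_1$, so $\M_1 = \oplus T_s(e_1,\dots,e_{d_1})$ and $\M_2$ is spanned by the images of $e_{d_1+1},\dots,e_d$, and simultaneously generators $\bolda$ of $\FM$ whose first $d_1$ members generate $\FM_1$ and whose last $d_2$ project to generators of $\FM_2$. With respect to these, the matrix $A$ expressing $\bolda = A\frac{\phi_r(\bolda)}{c^r}$ is block upper-triangular, $A = \begin{pmatrix} A_1 & * \\ 0 & A_2 \end{pmatrix}$, where $A_1, A_2$ are (lifts of) the matrices attached to $\M_1, \M_2$ — one has to check the lower-left block is zero, which follows because $\phi_r$ preserves $\M_1$ and its filtration so $\phi_r(\FM_1) \subseteq \M_1$, forcing the expansion of $\alpha_i$ for $i\le d_1$ to involve only $e_1,\dots,e_{d_1}$. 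Then $\phi^n(A)$ is block upper-triangular with diagonal blocks $\phi^n(A_1), \phi^n(A_2)$, and the product $\prod_{n=0}^{\infty}\phi^n(A)$ is block upper-triangular with diagonal blocks $\prod\phi^n(A_1)$ and $\prod\phi^n(A_2)$.

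Now the argument splits cleanly. If $\M$ is unipotent then $\prod_{n=0}^\infty\phi^n(A) = 0$, so both diagonal blocks vanish, hence $\M_1$ and $\M_2$ are unipotent. Conversely, suppose $\prod\phi^n(A_1) = 0$ and $\prod\phi^n(A_2) = 0$, say $\phi^{N}(A_1)\cdots A_1 = 0$ and $\phi^{N}(A_2)\cdots A_2 = 0$ for a common $N$. Multiplying out the block-triangular product $\phi^{M}(A)\cdots\phi(A)A$ for large $M$, every term is a product in which either a block of consecutive factors lands entirely in the $A_1$-strand or entirely in the $A_2$-strand, or the off-diagonal block is used exactly once at some position $j$, splitting the product into a left segment of $\phi^i(A_2)$'s, the single off-diagonal block $\phi^j(*)$, and a right segment of $\phi^i(A_1)$'s. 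Taking $M$ large enough (e.g. $M \ge 2N+1$) forces either the left $A_2$-segment or the right $A_1$-segment to have length $> N$, hence to vanish; and the pure diagonal terms vanish for the same reason. Therefore $\prod_{n=0}^\infty\phi^n(A) = 0$ and $\M$ is unipotent.

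The main obstacle — really the only nonroutine point — is justifying the block-triangular form with a genuinely zero lower-left block, i.e. that one can choose the adapted basis and the generators of $\FM$ simultaneously so that $A$ has exactly this shape; this rests on combining Lemma \ref{submodule}(1) applied to the pair $\M_1 \subseteq \M$ with the fact that $\phi_r(\FM_1) \subseteq \M_1$ and $\phi_r(\FM)$ generates $\M$. Once that shape is in hand, the equivalence is just the observation that nilpotence of a block-triangular matrix product is controlled by the diagonal blocks, which is the standard combinatorial estimate sketched above. I would also remark that an alternative, perhaps cleaner, route is to invoke Proposition 1.0.6: unipotence of $\M$ is equivalent to $\M^{\vee}$ having no nonzero multiplicative submodule, and $\M^{\vee, \mathrm{m}}$ being the maximal multiplicative submodule; dualizing the given sequence and using the snake-style compatibility of $(-)^{\mathrm{m}}$ with the resulting short exact sequence $0 \to \M_2^{\vee} \to \M^{\vee} \to \M_1^{\vee} \to 0$ reduces the claim to: $\M^{\vee}$ has a nonzero multiplicative submodule iff one of $\M_1^{\vee}, \M_2^{\vee}$ does — which is immediate in one direction and follows in the other from maximality of $(-)^{\mathrm m}$.
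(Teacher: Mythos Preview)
Your approach is essentially the paper's: choose compatible generators so that the matrix $A$ for $\M$ is block-triangular with diagonal blocks $A_1,A_2$, then apply Remark~\ref{remark}. One small slip: your own justification (``the expansion of $\alpha_i$ for $i\le d_1$ involves only $e_1,\dots,e_{d_1}$'') forces the \emph{upper-right} block to vanish, i.e.\ $A=\begin{pmatrix}A_1 & 0\\ \ast & A_2\end{pmatrix}$ as in the paper, not the shape you wrote---but this is immaterial to the argument.
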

\begin{proof}
Let $A_1, A_2$ be the matrix for $\M_1, \M_2$ respectively as in Lemma \ref{lemma0}. Then one can find some basis for $\M$ with the corresponding matrix $A=\begin{pmatrix}
A_1 & 0\\
\ast & A_2
\end{pmatrix}$. Then apply Remark \ref{remark}.
\end{proof}

\begin{thm} \label{thm abelian Kisin}
When $er \leq p-1$,
\begin{enumerate}
\item $\Mod_{k[u]/u^p}^{\phi}$ is an abelian category.
\item $\Mod_{k[u]/u^p}^{\phi,u}$ is an abelian subcategory.
\end{enumerate}
\end{thm}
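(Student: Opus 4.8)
The plan is to establish that $\Mod_{k[u]/u^p}^{\phi}$ is abelian by exhibiting an explicit equivalence with a category of the form $\Mod_{k[u]/u^p}^{\phi,u}$-type objects that is more transparently abelian, or alternatively to verify the abelian axioms directly by constructing kernels and cokernels inside the category. I would favor the direct approach, since all the technical machinery needed is already in place. The category clearly has a zero object, finite products and coproducts (direct sums of the underlying $T_p$-modules, with the obvious filtration and $\phi_r$), so the work is entirely in showing that every morphism has a kernel and a cokernel and that the natural map from coimage to image is an isomorphism.

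First I would take a morphism $f:\M_1\to\M_2$ in $\Mod_{k[u]/u^p}^{\phi}$ and, using Lemma \ref{lemma0}, describe $f$ by a matrix $X$ with $A_1\phi(X)=XA_2$, where $A_i$ are the structure matrices relative to bases adapted to the filtrations (which exist by Lemma \ref{submodule}(1)). The key observation, special to $s=p$, is that although $u$ is a zerodivisor in $T_p=k[u]/u^p$, the relations $B_iA_i=u^{er}\mathrm{Id}=u^{p-1}\mathrm{Id}$ give enough rigidity: from $A_1\phi(X)=XA_2$ one deduces $\phi(X)$ is canonically pinned down, and one can then define the image submodule $f(\M_1)\subseteq\M_2$, equip it with $\Fil^r f(\M_1):=f(\M_1)\cap\Fil^r\M_2$ and the restricted $\phi_r$, and check it lies in the category; dually, the kernel $\ker f$ with $\Fil^r\ker f:=\ker f\cap\Fil^r\M_1$ must be checked to be an object, i.e. that $\phi_r$ restricts and its image generates $\ker f$. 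The cokernel is handled by Cartier duality: $\mathrm{coker}(f)=(\ker(f^\vee))^\vee$, using the duality of the second unnumbered Lemma, which already transforms short exact sequences to short exact sequences. One then checks that the induced sequences on filtrations are short exact—this is where the hypothesis $er\le p-1$ (so that $\phi(\Fil^pS)$ behaves) and the free-ness statements from Proposition 1.0.4 get used—so that the canonical morphism $\mathrm{coim}(f)\to\mathrm{im}(f)$ is an isomorphism.

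For part (2), once (1) is known, I would invoke Lemma \ref{unip}: the unipotent objects form a full subcategory closed under subobjects and quotients (by Lemma \ref{unip}, any sub or quotient of a unipotent object is unipotent, and conversely an extension of unipotents is unipotent), and it is closed under finite direct sums, again by Lemma \ref{unip} or directly via Remark \ref{remark} applied to the block-triangular structure matrix. A full subcategory of an abelian category that is closed under kernels, cokernels and finite direct sums, and whose inclusion is exact, is itself abelian with the same kernels and cokernels; so the only point to verify is that the kernel and cokernel of a morphism of unipotent objects, computed in $\Mod_{k[u]/u^p}^{\phi}$, are again unipotent, which is immediate from Lemma \ref{unip} applied to $0\to\ker f\to\M_1\to\mathrm{im}\,f\to 0$ and $0\to\mathrm{im}\,f\to\M_2\to\mathrm{coker}\,f\to 0$.

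The main obstacle I anticipate is verifying that kernels and images, taken with the naive induced filtrations, actually satisfy the defining axiom that $\phi_r(\Fil^r)$ generates the whole module—this is exactly the kind of statement that can fail when $u$ is not a nonzerodivisor, and it is the reason the theorem is restricted to $s=p$ rather than general $s$. I expect this to come down to the same linear-algebra manipulation with the matrices $A_i$, $B_i$ used in Lemma \ref{lemma2}: the relation $B_iA_i=u^{p-1}\mathrm{Id}$ together with $p-1=er$ forces enough divisibility to conclude, and the argument should parallel the proof of Proposition 1.0.4 that $\M^{\mathrm m}$ is free with $\Fil^r\M^{\mathrm m}=u^{er}\M^{\mathrm m}$. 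Once the filtration short-exactness is secured, comparison of coimage and image is formal.
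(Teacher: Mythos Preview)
Your overall strategy---verify the abelian axioms directly by constructing kernels, images and cokernels with the naive induced filtrations, and then deduce (2) from (1) via Lemma \ref{unip}---is exactly the route the paper takes. However, you have misidentified the mechanism that makes $s=p$ special, and as written your argument has a gap at the crucial step.

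The key observation is not the relation $B_iA_i=u^{er}\mathrm{Id}$ (which holds in $T_s$ for every $s$, and in any case your equation $u^{er}=u^{p-1}$ presumes $er=p-1$ while the theorem is stated for all $er\le p-1$). What is genuinely special about $s=p$ is that $\phi(u)=u^p=0$ in $T_p$, so the Frobenius $\phi:T_p\to T_p$ factors through $k$. Consequently the matrix $\phi(X)$ from Lemma \ref{lemma0} has entries in $k$, not merely in $k[u]/u^p$; since $f(\M_1)$ is the $T_p$-span of the rows of $\phi(X)$ applied to a basis of $\M_2$, it is immediately a free $T_p$-module. This single observation is what drives the rest: once $f(\M_1)$ is known to be free (hence an object), one reduces the analysis of $\Fil^r f(\M_1)=f(\M_1)\cap\Fil^r\M_2$ to the injective case, and there an adapted basis together with the fact that $\phi_r(u^{x_i}\alpha_i)=\phi(u^{x_i})\phi_r(\alpha_i)=0$ whenever $x_i>0$ forces the filtration to line up. The same ``$\phi(u)=0$'' trick handles the kernel and cokernel.

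Your proposed resolution via the divisibility coming from $B_iA_i=u^{p-1}\mathrm{Id}$, modelled on Lemma \ref{lemma2} and Proposition 1.0.4, does not obviously produce freeness of $f(\M_1)$, and I do not see how to make it work without the $\phi(u)=0$ observation. Your idea of obtaining the cokernel as $(\ker f^\vee)^\vee$ via Cartier duality is perfectly valid and would save a paragraph; the paper instead treats the cokernel directly, by the same adapted-basis argument as for the kernel.
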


\begin{proof}
For (1), when $er<p-1$, just as pointed out in Theorem 3.5.1 of \cite{Car06}, the proof is verbatim as that of Corollary 2.2.3.2 in \cite{Bre98b}. In fact, it also works for $er =p-1$.
But here, we give a more direct proof of this fact for all $er \leq p-1$ (without using the category $C_k$ in \cite{Bre98b}).

First, we show that if $f: \M_1 \to \M_2$ is a morphism in $\Mod_{k[u]/u^p}^{\phi}$, then $f(\M_1)$ is finite free.
We use notations from Lemma \ref{lemma0}, then $f(\M_1)$ is generated by $\phi(X)(f_1, \cdots, f_n)$, where $(f_1, \cdots, f_n)$ is a basis of $\M_2$, and $X$ is a matrix with coefficient in $k[u]/u^p$. Since $\phi(u)=u^p=0$  in $k[u]/u^p$, so $\phi(X)$ is in fact a matrix with coefficient in $k$. Thus we can easily show that $f(\M_1)$ is finite free, and so $(f(\M_1), f(\Fil^r \M_1), \phi_r)$ is an object in $\Mod_{k[u]/u^p}^{\phi}$.

Now, we show that if $f: \M_1 \to \M_2$ is a morphism in $\Mod_{k[u]/u^p}^{\phi}$, then $f(\Fil^r \M_1) =f(\M_1) \cap \Fil^r \M_2$. Since we have shown that $f(\M_1) \in \Mod_{k[u]/u^p}^{\phi}$, we can and do assume that $f: \M_1 \to \M_2$ is injective. By Lemma \ref{submodule}, we can assume that $\Fil^r \M_2$ is a direct sum of the form $\oplus_{i=1}^{n} T_p \alpha_i$, and $\Fil^r \M_1 =\oplus_{i=1}^{n} T_p u^{x_i}\alpha_i$. Suppose $x_i=0$ for $0 \leq i \leq a$, and $x_i >0$ for $a+1 \leq i \leq n$. Since $\phi_r(u^{x_i}\alpha_i)=\phi(u^{x_i})\phi_r(\alpha_i)=0$ for $x_i>0$, so $\M_1$ is generated by $\phi_r(\alpha_i)$ for $0 \leq i \leq a$. Thus $\M_1$ is of rank $a$, and we can choose a basis $(e_1, \cdots, e_a)$ of $\M_1$, such that $\Fil^r \M_1 =\oplus_{i=1}^a u^{y_j}T_pe_j =\oplus_{i=1}^n T_pu^{x_i}\alpha_i$. By Lemma \ref{submodule}(2), we must have $\Fil^r \M_1 =\oplus_{i=1}^{a} T_p \alpha_i$. Then we can easily deduce that $f(\Fil^r \M_1) =f(\M_1) \cap \Fil^r \M_2$.

Now, we show that if $f: \M_1 \to \M_2$ is a morphism in $\Mod_{k[u]/u^p}^{\phi}$, then the kernel $\mathcal K$ with $\Fil^r \mathcal K:= \mathcal K \cap \Fil^r \M_1$ and naturally induced $\phi_r$ is an object in $\Mod_{k[u]/u^p}^{\phi}$.
Since we have shown that $f(\M_1) \in \Mod_{k[u]/u^p}^{\phi}$, we can and do assume that $f: \M_1 \to \M_2$ is surjective and $f(\Fil^r \M_1)=\Fil^r \M_2$. Suppose that the rank of $\M_1$ and $\M_2$ is $d$ and $n$ respectively.
By Lemma \ref{submodule}(1), we can take $(e_1, \cdots, e_d)$ a basis of $\M_1$, such that $\mathcal K=\oplus_{i=1}^d T_p u^{x_i}e_i$ for some $0 \leq x_1 \leq \cdots \leq x_d \leq p$. Then $\M_2 = \oplus_{i=1}^d T_p \bar{e}_i$ where $\bar{e}_i$ is $u^{x_i}$-torsion. Since $\M_2$ is finite free, we conclude that $x_1=\cdots=x_{d-n}=0$ and $x_{d-n+1}=\cdots=x_d=p$. Thus $\mathcal K$ is finite free over $k[u]/u^p$ of rank $d-n$.
Now, clearly $\Fil^r \mathcal K \supseteq u^{er}\mathcal K$, and since $\phi_r$ and $f$ commute, $\phi_r(\Fil^r \mathcal K) \subseteq \mathcal K$. By Lemma \ref{submodule}, we can suppose $\Fil^r \mathcal K = \oplus_{i=1}^{d-n} \alpha_i, \Fil^r \M_2 =\oplus_{j=1}^n \beta_j$. For any $1\leq j \leq n$, take $\hat \beta_j \in \Fil^r \M_1$ such that $f(\hat \beta_j)=\beta_j$. Then $(\alpha_i, \hat \beta_j)_{i, j}$ generate $\Fil^r \M_1$, and so $(\phi_r(\alpha_i), \phi_r(\hat \beta_j))_{i, j}$ generate $\M_1$. Thus $(\phi_r(\alpha_i))_i$ generate $\mathcal K$.

Now we show that if $f: \M_1 \to \M_2$ is a morphism in $\Mod_{k[u]/u^p}^{\phi}$, then the cokernel $\mathcal N$ with naturally induced $\Fil^r$ and $\phi_r$ is an object in $\Mod_{k[u]/u^p}^{\phi}$.
Again, we can and do assume that $f$ is an injective morphism. Then by Lemma \ref{submodule}, we can suppose $\Fil^r \M_2=\oplus \alpha_i$, and $\Fil^r \M_1 =\oplus u^{x_i}\alpha_i$ for $x_i=0$ when $0 \leq i \leq a$, and $x_i>0$ when $i >a$. Since $\phi_r(u^{x_i}\alpha_i)=0$ if $x_i>0$, similarly as the end of the second paragraph of the proof shows, we must have $\Fil^r \M_1 =\oplus_{i=1}^a \alpha_i$. Then it is easy to deduce that $\Fil^r \mathcal N=\oplus_{i=a+1}^d\alpha_i$, $\mathcal N$ is finite free and is in fact generated by $\phi_r(\alpha_i), a+1 \leq i \leq d$.

For (2), use (1) and Lemma \ref{unip}.
\end{proof}

\begin{corollary} \label{Cor abelian Kisin}
\begin{enumerate}
\item When $er <p-1$, $\Mod_{k[u]/u^s}^{\phi}$ is an abelian category for $ep \geq s >p$.
\item When $er =p-1$, $\Mod_{k[u]/u^s}^{\phi, u}$ is an abelian category for $ep \geq s >p$.
\end{enumerate}
\end{corollary}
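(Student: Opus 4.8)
The plan is to reduce both statements to the base case $s=p$ of Theorem 1.0.13 by transporting the abelian structure along the equivalences of Theorem 1.0.8. The underlying principle is that being an abelian category is a purely categorical property — a zero object, finite biproducts, kernels and cokernels, and the conditions that every monomorphism be a kernel and every epimorphism a cokernel — hence is inherited by any category equivalent to an abelian one; and both $\Modus$ and $\Modusunip$ are evidently additive, since direct sums of objects (with the direct-sum filtration and $\phi_r$) again lie in the category and the Hom-sets are abelian groups under addition of $T_s$-linear maps. So in each case it suffices to exhibit an equivalence with an abelian category.

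For (1), assume $er<p-1$ and $ep\geq s>p$. Then Theorem 1.0.8(4) (an instance of which is legitimate since $ep\geq s>p$) shows that the reduction functor $\M_{s,p}\colon \Modus \to \Mod_{k[u]/u^p}^{\phi}$ is an equivalence of categories; since $\Mod_{k[u]/u^p}^{\phi}$ is abelian by Theorem 1.0.13(1), so is $\Modus$. For (2), assume $er=p-1$ and $ep\geq s>p$; by Theorem 1.0.8(2) the functor $\M_{s,p}\colon \Modusunip \to \Mod_{k[u]/u^p}^{\phi,u}$ is an equivalence, and $\Mod_{k[u]/u^p}^{\phi,u}$ is abelian, being an abelian subcategory of $\Mod_{k[u]/u^p}^{\phi}$ by Theorem 1.0.13(2); hence $\Modusunip$ is abelian. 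In (2) the passage to the unipotent subcategory is essential: for $er=p-1$ and $s>p$ the full category $\Modus$ is not expected to be abelian, and Theorem 1.0.8 relates level $s$ to the base case $s=p$ only on the unipotent subcategories.

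There is no serious obstacle here: the corollary is a formal consequence of the equivalences of Theorem 1.0.8 and of abelianness at level $p$. The only point deserving attention, if one wants the designated short exact sequences of Section 1 — rather than merely the abstract abelian structure — to transfer, is the compatibility of $\M_{s,p}$ and of its quasi-inverse with that designated class; one direction is exactly Theorem 1.0.8(3), while at level $p$ the designated and categorical short exact sequences agree by the explicit construction of kernels and cokernels in the proof of Theorem 1.0.13(1), and both checks are routine.
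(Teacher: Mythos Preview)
Your proof is correct and follows essentially the same approach as the paper, which simply says ``Combine Theorem 1.0.8 and Theorem 1.0.13.'' You have merely spelled out explicitly which parts of Theorem 1.0.8 are invoked in each case and recalled that abelianness transports along equivalences; the added remarks on additivity and on compatibility with the designated short exact sequences are sound but go beyond what the paper records.
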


\begin{proof}
Combine Theorem \ref{thm 4 list} and Theorem \ref{thm abelian Kisin}. Note that we have given a new proof to Corollary 3.5.7 of \cite{Car06}.
\end{proof}

\section{Unipotent torsion Breuil modules}\label{section unip tor Breuil}

In this section, we prove that the category of unipotent torsion Breuil modules is an abelian category when $er=p-1, r<p-1$.

\newcommand{\MS}{\underline{\M}^{r}}
\newcommand{\MSu}{\underline{\M}^{r,u}}

Let $\MS$ be the category consisting of objects $(\M, \Fil^r \M, \phi_r, N)$ (called torsion Breuil modules) where
\begin{enumerate}
\item $\M= \oplus_{i \in I} S_{n_i}$ for a finite set $I$.
\item $\Fil^r \M$ is an $S$-submodule which contains $\Fil^rS\cdot \M$.
\item $\phi_r: \Fil^r \M \to \M$ is a Frobenius-semi-linear map such that $\phi_r(sx) =c^{-r}\varphi_r(s)\varphi_r(E(u)^rx)$ for $s\in \Fil^r S$
and $x \in \M$, and the image of $\phi_r$ generates $\M$.
\item $N: \M \to \M$ is a $W(k)$-linear map such that
\begin{itemize}
\item $N(sx)=N(s)x+sN(x)$ for all $s \in S, x \in \M$.
\item $E(u)N(\Fil^r \M) \subset \Fil^r \M$.
\item The following diagram is commutative:

$\begin{CD}
\Fil^r \M @>\phi_r>> \M \\
@V E(u)NVV @VVcNV\\
\Fil^r \M @>\phi_r>> \M.
\end{CD}$

\end{itemize}

\end{enumerate}

Morphisms in the category are $S$-linear maps that are compatible with $\Fil^r, \phi_r$ and $N$.

Let $\underline \M^{r, \phi}$ be the category similar to $\underline \M^{r}$ but without $N$, i.e., $\underline \M^{r, \phi}$ consists of objects $(\M, \Fil^r \M, \phi_r)$ satisfying (1), (2) and (3) above.

We denote the subcategory of $\MS$ consisting of objects killed by $p$ by $\Mod_{S_1}^{\phi, N}$, and the subcategory of $\underline \M^{r, \phi}$ consisting of objects killed by $p$ by $\Mod_{S_1}^{\phi}$.

When $r <p-1$, by the isomorphism $S_1/\Fil^p S_1 \simeq k[u]/u^{ep}$, there is a natural functor $\Mod_{S_1}^{\phi} \to  \Mod_{k[u]/u^{ep}}^{\phi}$ by sending $(\M, \Fil^r \M, \phi_r)$ to $(\M/\Fil^pS_1\M, \Fil^r \M/\Fil^pS_1\M, \phi_r)$. Note that this functor can be defined only when $r<p-1$, because we need to have $\phi_r(\Fil^pS_1\cdot \M)=0$.

Recall that in Definition 2.5.3 of \cite{Gao13}, for $\M \in \Mod_{S_1}^{\phi}$, if $\Fil^r \M =\oplus \bolda+ \Fil^p S_1\M$, $\frac{\phi_r(\bolda)}{c^r}=\bolde$, $\bolda = A\bolde$ with $A \in \Mat(S_1)$. Then $\M$ is called \textit{unipotent} (with respect to $\bolda$ and $\bolde$) if $\Pi_{n=1}^{\infty} \phi^n(A)=0$. The definition in \cite{Gao13} is only stated for $r=p-1$, but in fact it works for any $r \leq p-1$ (and any $e$), and it can be easily checked that the definition of unipotency is independent of choice of $\bolda$ and $\bolde$. Denote the unipotent subcategory by $\Mod_{S_1}^{\phi, u}$. An object $\M \in \Mod_{S_1}^{\phi, N}$ is called \textit{unipotent} if after forgetting $N$, it is a unipotent module in $\Mod_{S_1}^{\phi}$, we denote this unipotent subcategory by $\Mod_{S_1}^{\phi, N, u}$.

\begin{prop} \label{prop functor Kisin Breuil}
For any $e>0$, $r<p-1$, the functor $\Mod_{S_1}^{\phi} \to  \Mod_{k[u]/u^{ep}}^{\phi}$ is an
equivalence. It transforms short exact sequences to short exact sequences.
The functor also induces equivalence on the unipotent subcategories.
\end{prop}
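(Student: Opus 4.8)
The plan is to pin down the structure of $S_1$ relative to the ideal $J:=\Fil^pS_1$, and then to lift objects and morphisms along $S_1\twoheadrightarrow S_1/J\cong k[u]/u^{ep}$ essentially for free, exploiting that $J$ is a nilpotent ideal on which both $\phi$ and $\phi_r$ vanish. First I would record the ring facts that make this work. Since $\phi(\Fil^pS)\subseteq p^{p-1}S$ one has $\phi(J)=0$ in $S_1$; since $\phi_r=\phi/p^r$ and $r<p-1$ (so $p-1-r\geq1$) one also has $\phi_r(J)=0$ in $S_1$, and this is precisely what makes the functor $F\colon\Mod_{S_1}^{\phi}\to\Mod_{k[u]/u^{ep}}^{\phi}$, $\M\mapsto\M/J\M$, well defined, since axiom (3) then forces $\phi_r(J\M)=0$ on every object. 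Moreover $J$ is generated by $\gamma_p(E(u))$, is nilpotent ($J^p=0$, as $\gamma_p(E(u))^p\in pS$), and $\Fil^rS_1=(E(u)^r)+J=(u^{er})+J$. Finally $E(u)^p=p!\,\gamma_p(E(u))\in pS$ forces $u^{ep}=0$ in $S_1$, so $k[u]\to S_1$ has kernel exactly $(u^{ep})$ and induces a $\phi$-equivariant ring section $\sigma\colon k[u]/u^{ep}\cong S_1/J\hookrightarrow S_1$ of the projection. Since every object of $\Mod_{S_1}^{\phi}$ is killed by $p$ it is a finite free $S_1$-module, so Nakayama applies and Lemma \ref{lemma0} is available on the target.

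For full faithfulness, take $\M_1,\M_2\in\Mod_{S_1}^{\phi}$ and $f\colon F(\M_1)\to F(\M_2)$. Any $S_1$-linear lift $g$ of $f$ is automatically compatible with $\Fil^r$, since $J\M_2\subseteq\Fil^rS_1\cdot\M_2\subseteq\FM_2$ makes the mod-$J$ filtration condition vacuous. To repair compatibility with $\phi_r$, apply Lemma \ref{lemma0} to $F(\M_1)$ and lift, obtaining $\alpha_1,\dots,\alpha_d\in\FM_1$ with $\FM_1=\sum S_1\alpha_i+J\M_1$ and with $e_i:=\phi_r(\alpha_i)$ a basis of $\M_1$; set $h(e_i):=\phi_r(g(\alpha_i))-g(\phi_r(\alpha_i))\in J\M_2$ and extend $S_1$-linearly to $h\colon\M_1\to J\M_2$. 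Writing a general element of $\FM_1$ as $x=\sum s_i\alpha_i+y$ with $y\in J\M_1$ and using $\phi_r(J\M_i)=0$ together with Frobenius-semilinearity of $\phi_r$, a direct computation shows $g+h$ commutes with $\phi_r$; as $F(h)=0$, the map $g+h$ is a morphism lifting $f$. Injectivity is immediate: if $F(f)=0$ then $f(\M_1)\subseteq J\M_2$, so $\phi_r(f(x))=0$ for $x\in\FM_1$, hence $f$ annihilates $\phi_r(\FM_1)$, which generates $\M_1$.

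For essential surjectivity, given $\bar\M\in\Mod_{k[u]/u^{ep}}^{\phi}$, use Lemma \ref{lemma0} to fix a basis and a matrix $\bar A$, together with $\bar B$ satisfying $\bar A\bar B=\bar B\bar A=u^{er}Id$; put $\hat A:=\sigma(\bar A)$, $\hat B:=\sigma(\bar B)$, and define $\M:=\oplus_i S_1\hat e_i$, $\FM:=\oplus_i S_1\hat\alpha_i+J\M$ with $\hat\alpha_i:=\sum_k\hat A_{ik}\hat e_k$, and $\phi_r$ by $\phi_r(\hat\alpha_i):=c^r\hat e_i$, $\phi_r|_{J\M}:=0$, extended Frobenius-semilinearly. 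That $\FM\supseteq\Fil^rS_1\cdot\M$ follows from $\hat B\hat A=u^{er}Id$; the only delicate point is well-definedness of $\phi_r$. A relation $\sum c_i\hat\alpha_i\in J\M$ says $c^{T}\hat A$ has entries in $J$, so reducing modulo $J$ gives $\bar c^{T}\bar A=0$ in $k[u]/u^{ep}$; multiplying by $\bar B$ gives $u^{er}\bar c=0$, hence $\bar c_i\in u^{ep-er}(k[u]/u^{ep})$ and $\phi(\bar c)=0$ because $pe(p-r)\geq ep$; then $\phi(J)=0$ and the section $\sigma$ upgrade this to $\phi(c)=0$ in $S_1$, which is exactly what the formula $\phi_r(\sum c_i\hat\alpha_i)=c^r\sum\phi(c_i)\hat e_i$ needs. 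By construction $F(\M)\cong\bar\M$.

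Finally, $F$ preserves short exact sequences: objects being free over $S_1$, a short exact sequence is $S_1$-split and remains exact after $-\otimes_{S_1}S_1/J$, while the filtration sequence remains exact by a short diagram chase using $J\M\cap\M_1=J\M_1$ (as $\M_1$ is a direct summand of $\M$) and $\FM\cap\M_1=\FM_1$ (part of the definition of exactness). For the unipotent subcategories, unipotency of $\M$ is the vanishing of a product $\prod_{n\geq1}\phi^n(A)$ (the definition recalled in Section 2, and Remark \ref{remark} for the target), and since $\phi(J)=0$ every $\phi^n(A)$ with $n\geq1$ has entries in $\mathrm{im}(\sigma)$, so this product equals $\sigma$ applied to the corresponding product for $\bar A$; hence $F$ matches up the unipotent objects, and combined with the equivalence already obtained this gives the equivalence on unipotent subcategories. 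I expect the well-definedness of $\phi_r$ in the essential-surjectivity step to be the main obstacle, as it is the one place genuinely using the fine structure of $S_1$ — $\phi(J)=0$, the ring section $\sigma$, and the inequality $pe(p-r)\geq ep$ — rather than formal properties of a nilpotent ideal.
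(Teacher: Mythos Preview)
Your proof is correct. The paper, however, does not prove the equivalence directly: it simply cites \cite[Proposition~2.3.1]{Car06} for the equivalence $\Mod_{S_1}^{\phi}\simeq\Mod_{k[u]/u^{ep}}^{\phi}$, and then supplies only the short arguments for exactness (via $\Fil^pS_1\M\cap\M_1=\Fil^pS_1\M_1$, which is your $J\M\cap\M_1=J\M_1$) and for the unipotent statement (via Remark~\ref{remark}). Your exactness and unipotency arguments are essentially identical to the paper's; the genuine difference is that you replace the citation by a self-contained lifting argument along $S_1\twoheadrightarrow S_1/J$, using the $\phi$-equivariant ring section $\sigma$ and the key vanishing $\phi(J)=\phi_r(J)=0$ (the latter forcing $r<p-1$). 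This buys independence from \cite{Car06} and makes transparent exactly where the hypothesis $r<p-1$ enters. One small remark: the claim ``$J^p=0$'' is stronger than you need and not obviously true as stated (you only checked $\gamma_p(E(u))^p\in pS$, not that arbitrary products of $p$ elements of $\Fil^pS$ lie in $pS$); for Nakayama it suffices that $S_1$ is local with $J$ contained in its maximal ideal, which follows since $u$ is nilpotent in $S_1$ and $S_1/((u)+J)\cong k$.
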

\begin{proof}
The equivalence is Proposition 2.3.1 of \cite{Car06}.
To check the equivalence on unipotent subcategories, we only need to check that the functor and its inverse sends unipotent objects to unipotent objects, and we can use Remark \ref{remark} for this.
To check the exactness, let $0 \to \M_1 \to \M \to \M_2 \to 0$ be a short exact sequence in $\Mod_{S_1}^{\phi}$, to check short exactness of the resulting sequence, it suffices to check that $\Fil^r \M_1/\Fil^p S_1\M_1 \to \Fil^r\M/\Fil^pS_1\M$ is injective, i.e., $\Fil^r\M_1\cap \Fil^p S_1\M=\Fil^pS_1\M_1$. This is true because
$$\Fil^pS_1\M_1 \subseteq  \Fil^r\M_1\cap \Fil^p S_1\M  \subseteq \M_1 \cap\Fil^p S_1\M =\Fil^pS_1\M_1.$$
\end{proof}

\begin{thm}
\begin{enumerate}
\item When $er=p-1, r<p-1$, $\Mod_{S_1}^{\phi, u}$ is an abelian catetory.
\item When $er=p-1, r<p-1$, $\Mod_{S_1}^{\phi, N, u}$ is an abelian catetory.
\end{enumerate}
\end{thm}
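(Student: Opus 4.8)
The plan is to deduce both statements from the results of Section~1 by means of the equivalence recorded in the preceding proposition. \textbf{Part (1).} Since $er=p-1$ and $r<p-1$ we have $e\geq 2$, hence $ep>p$, so Corollary~1.0.14(2) applies with $s=ep$ and tells us that $\Mod_{k[u]/u^{ep}}^{\phi,u}$ is abelian. By the preceding proposition the functor $\M\mapsto(\M/\Fil^pS_1\M,\ \Fil^r\M/\Fil^pS_1\M,\ \phi_r)$ is an equivalence $\Mod_{S_1}^{\phi}\to\Mod_{k[u]/u^{ep}}^{\phi}$ which restricts to an equivalence $\Mod_{S_1}^{\phi,u}\xrightarrow{\sim}\Mod_{k[u]/u^{ep}}^{\phi,u}$ and which, together with its quasi-inverse, carries short exact sequences to short exact sequences. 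Since abelianness is invariant under equivalence of categories, $\Mod_{S_1}^{\phi,u}$ is abelian, with exact structure given by the short exact sequences defined above.

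\textbf{Part (2).} Here I would argue that the faithful additive forgetful functor $U\colon\Mod_{S_1}^{\phi,N,u}\to\Mod_{S_1}^{\phi,u}$ is conservative and creates kernels and cokernels; since its target is abelian by (1), it follows that $\Mod_{S_1}^{\phi,N,u}$ is abelian, with kernels, cokernels and images computed by forgetting $N$. Concretely, for a morphism $f\colon\M_1\to\M_2$ in $\Mod_{S_1}^{\phi,N,u}$, part~(1) produces — arguing as in the proof of Theorem~1.0.13 and transporting through the equivalences of Theorem~1.0.8(2) and of the proposition above — a kernel $\mathcal K=\{x\in\M_1:f(x)=0\}$ with $\Fil^r\mathcal K=\mathcal K\cap\Fil^r\M_1$, an image $f(\M_1)\subseteq\M_2$ with $\Fil^r f(\M_1)=f(\Fil^r\M_1)=f(\M_1)\cap\Fil^r\M_2$, and a cokernel $\mathcal N=\M_2/f(\M_1)$ whose filtration is the image of $\Fil^r\M_2$, each carrying the induced $\phi_r$. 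Because $f$ commutes with $N$, the monodromy operator of $\M_1$ preserves $\mathcal K$, that of $\M_2$ preserves $f(\M_1)$ and descends to $\mathcal N$, and one checks the three conditions on $N$ from the definition of a torsion Breuil module for each of these: the Leibniz rule is inherited; for $\mathcal K$ one has $E(u)N(\Fil^r\mathcal K)\subseteq\mathcal K$ since $N$ preserves $\mathcal K$, and $E(u)N(\Fil^r\mathcal K)\subseteq E(u)N(\Fil^r\M_1)\subseteq\Fil^r\M_1$, whence $E(u)N(\Fil^r\mathcal K)\subseteq\mathcal K\cap\Fil^r\M_1=\Fil^r\mathcal K$, and the commutative square for $\mathcal K$ is the restriction of that for $\M_1$; the cases of $f(\M_1)$ (a quotient of $\M_1$ and a submodule of $\M_2$) and of $\mathcal N$ (a quotient of $\M_2$) are handled identically, using that the filtration on a subobject is an intersection and on a quotient an image. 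Finally $0\to\mathcal K\to\M_1\to f(\M_1)\to 0$ and $0\to f(\M_1)\to\M_2\to\mathcal N\to 0$ are short exact, so $\mathcal K$, $f(\M_1)$ and $\mathcal N$ are unipotent whenever $\M_1$ and $\M_2$ are, by Lemma~\ref{unip} transported to $\Mod_{S_1}^{\phi}$ via the proposition above (equivalently, by choosing block-triangular matrices for these sequences and invoking Remark~\ref{remark} as in the proof of that lemma). Conservativity of $U$ is immediate, as the $S_1$-linear inverse of an isomorphism automatically commutes with $N$; hence $\Mod_{S_1}^{\phi,N,u}$ is abelian.

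All the genuine work lies in Section~1 — that $\M_{t,s}$ is an equivalence (Theorem~1.0.8) and that $\Mod_{k[u]/u^p}^{\phi}$ is abelian (Theorem~1.0.13) — so in the argument above the only delicate point is the bookkeeping with $N$, and in particular confirming that, after passing through the chain of equivalences, the kernel and cokernel of $f$ really do carry the naive induced filtrations $\mathcal K\cap\Fil^r\M_1$ and the image of $\Fil^r\M_2$; once this is granted, stability of the monodromy condition $E(u)N(\Fil^r(-))\subseteq\Fil^r(-)$ under passing to kernels and cokernels is immediate, and I do not anticipate any further obstacle.
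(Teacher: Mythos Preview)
Your proof is correct and follows essentially the same approach as the paper: for (1) you combine Corollary~1.0.14(2) (applied with $s=ep$, using $e\geq 2$) with Proposition~2.0.15, exactly as the paper does; for (2) you spell out in detail what the paper summarizes as ``keeping track of the $N$-action,'' and your verification that $N$ descends to kernels, images, and cokernels is exactly the content the paper leaves implicit.
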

\begin{proof}
For statement(1), combine Corollary \ref{Cor abelian Kisin}(2) and Proposition \ref{prop functor Kisin Breuil}. Statement (2) is easy corollary of (1) by keeping track of the $N$-action. For example, given a morphism $f: \M_1 \to \M_2$ in $\Mod_{S_1}^{\phi, N, u}$, then $\Ker f \in \Mod_{S_1}^{\phi, u}$ by (1). But $\Ker f$ also has the naturally induced monodromy operator $N$ (induced from that of $\M_1$, which makes it an object in $\Mod_{S_1}^{\phi, N, u}$. Similar argument works for $\Coker f$.
\end{proof}

For a module $\M \in \MS$, define $\Fil^r p^m \M:= p^m \Fil^r \M$, then $(p^m \M, \Fil^r p^m \M)$ with the induced $\phi_r$ and $N$ is an object in $\MS$.
For a module $\M \in \MS$, we also define $\M^{(p^m)}: =\{ x \in \M, p^mx=0\}$, and let $\Fil^r \M^{(p^m)}:= \M^{(p^m)} \cap \Fil^r \M$.

\begin{lemma}
When $r<p-1$, $\M^{(p)} \in \MS$.
\end{lemma}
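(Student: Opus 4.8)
The plan is to verify the three structural axioms (1)--(3) defining $\underline{\M}^r$, together with the $N$-axiom (4), for the triple $(\M^{(p)}, \Fil^r \M^{(p)}, \phi_r, N)$, where $\Fil^r\M^{(p)} = \M^{(p)}\cap\Fil^r\M$ and $\phi_r$, $N$ are restricted from $\M$. The crucial point is axiom (1): that $\M^{(p)}$, the $p$-torsion submodule of $\M = \oplus_{i\in I} S_{n_i}$, is again a finite direct sum of $S_{n_i}$'s. Since $p S_{n_i} \simeq S_{n_i-1}$ inside $S_{n_i}$ (or is $0$ if $n_i=1$), one has $S_{n_i}^{(p)} = p^{n_i-1}S_{n_i} \simeq S_1$ when $n_i\geq 1$, so $\M^{(p)} = \oplus_{i\in I} p^{n_i-1}S_{n_i} \simeq \oplus_{i\in I}S_1$, which is of the required form. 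Axiom (2) is immediate: $\Fil^r S \cdot \M^{(p)} \subseteq \Fil^r\M$ since $\Fil^rS\cdot\M\subseteq\Fil^r\M$ by axiom (2) for $\M$, and $\Fil^rS\cdot\M^{(p)}\subseteq\M^{(p)}$ since $\M^{(p)}$ is a submodule killed by $p$; hence $\Fil^rS\cdot\M^{(p)}\subseteq\M^{(p)}\cap\Fil^r\M = \Fil^r\M^{(p)}$.

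The verification of axiom (3) is where the hypothesis $r<p-1$ enters, and this is the main obstacle. We must check that $\phi_r$ maps $\Fil^r\M^{(p)}$ into $\M^{(p)}$ and that its image generates $\M^{(p)}$ as an $S$-module. The first part is automatic since $\phi_r$ is additive and $p$-torsion is preserved: $p\cdot\phi_r(x) = \phi_r(px) = \phi_r(0) = 0$ for $x\in\Fil^r\M^{(p)}$ (using that $p\in W(k)$ and $\phi_r$ is Frobenius-semilinear hence additive). For the generation statement, the point is that, via the equivalence $\Mod_{S_1}^\phi \simeq \Mod_{k[u]/u^{ep}}^\phi$ of Proposition 2.0.15 (available precisely because $r<p-1$), the object $\M^{(p)}$ — being killed by $p$ — corresponds to an object of $\Mod_{k[u]/u^{ep}}^\phi$, and there the generation condition can be checked after reducing mod $\Fil^p S_1$; since $\phi_r$ kills $\Fil^p S_1\cdot\M$ (here $r<p-1$ is used again so that $\phi_r(\Fil^pS_1\cdot x) = 0$), no generation is lost. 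Concretely, picking a basis of $\M$ adapted to $\Fil^r\M$ as in Lemma \ref{lemma0} and noting $\M^{(p)} = p^{\bullet}\M$ componentwise, one reads off that the Frobenius-matrix $A$ for $\M$ reduces to the Frobenius-matrix for $\M^{(p)}$, and invertibility of the relevant change-of-basis matrix (equivalently, the generation property) is inherited. I expect this bookkeeping — tracking $\Fil^r\M^{(p)}$ through an adapted basis and confirming that $\phi_r(\Fil^r\M^{(p)})$ still generates $\M^{(p)}$ — to be the step requiring the most care.

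Finally, axiom (4): $N$ restricted to $\M^{(p)}$ lands in $\M^{(p)}$ because $N$ is additive and $W(k)$-linear, so $p\cdot N(x) = N(px) = 0$; the Leibniz rule $N(sx) = N(s)x + sN(x)$ is inherited verbatim; $E(u)N(\Fil^r\M^{(p)})\subseteq \Fil^r\M^{(p)}$ follows from $E(u)N(\Fil^r\M)\subseteq\Fil^r\M$ together with preservation of $p$-torsion; and the commutative square relating $\phi_r$, $E(u)N$ and $cN$ holds on $\M^{(p)}$ simply by restriction, since all four maps in the square for $\M$ preserve the $p$-torsion submodule. Assembling these observations gives $\M^{(p)}\in\underline{\M}^r$.
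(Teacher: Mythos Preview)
Your verifications of axioms (1), (2), (4), and the fact that $\phi_r$ maps $\Fil^r\M^{(p)}$ into $\M^{(p)}$ are fine. The gap is in the generation statement in axiom (3), which is the entire content of the lemma and which you have not actually proved.

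You invoke Lemma~\ref{lemma0} to pick an adapted basis for $\Fil^r\M$, but that lemma applies only to objects of $\Mod_{k[u]/u^s}^{\phi}$, i.e.\ to modules killed by $p$; here $\M=\oplus_i S_{n_i}$ is only $p^n$-torsion, and there is no adapted-basis statement available for $\Fil^r\M$ inside $\M$. Likewise your claim that ``the Frobenius matrix $A$ for $\M$ reduces to the Frobenius matrix for $\M^{(p)}$'' presupposes that $\Fil^r\M^{(p)}$ is obtained from $\Fil^r\M$ by applying $p^{n_i-1}$ componentwise; equivalently, that $\Fil^r\M^{(p)}=\M^{(p)}\cap\Fil^r\M$ coincides with the ``obvious'' candidate built from the direct-sum decomposition. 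This is exactly the kind of compatibility (cf.\ Lemma~2.0.19, where $p^i\Fil^r\M=p^i\M\cap\Fil^r\M$ is proved \emph{a posteriori}, and only in the unipotent case) that one cannot assume at this point. Finally, your Nakayama-style reduction ``check generation after reducing mod $\Fil^pS_1$'' is circular: the equivalence of Proposition~2.0.15 takes as input an object of $\Mod_{S_1}^{\phi}$, and membership of $\M^{(p)}$ in that category is precisely what is at stake.

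The paper's proof is genuinely different: it proceeds by induction on the minimal $p$-power killing $\M$, and for the base case $p^2\M=0$ it sets up the commutative diagram
\[
\begin{CD}
@.  S\otimes_{\phi, S} \Fil^r \M^{(p)}  @>>> S\otimes_{\phi, S} \Fil^r \M  @>\times p>>  S\otimes_{\phi, S} p\Fil^r \M  @>>>0 \\
@.   @VV1\otimes \phi_r V @VV1\otimes \phi_rV @VV1\otimes \phi_rV\\
0@>>> \M^{(p)} @>>> \M @>\times p>>  p\M @>>> 0
\end{CD}
\]
and chases it: given $x\in\M^{(p)}$, one lifts to $\hat x\in S\otimes_{\phi,S}\Fil^r\M$ with $1\otimes\phi_r(\hat x)=x$, observes that $p\hat x$ lies in the kernel of $1\otimes\phi_r$ on $S\otimes_{\phi,S} p\Fil^r\M$, and then uses the description of that kernel (Lemma~2.2.1 of \cite{Car08}, which requires $r<p-1$) to correct $\hat x$ to an element of $S\otimes_{\phi,S}\Fil^r\M^{(p)}$ hitting $x$. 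The identification of $\ker(1\otimes\phi_r)$ as $S\otimes_{\phi,S}(E(u)\Fil^r+\Fil^pS\cdot(-))$ is the substantive input you are missing; it is where $r<p-1$ is really used, and no amount of matrix bookkeeping over an adapted basis will substitute for it.
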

\begin{proof}
The proof follows the same idea as in Lemma 2.3.1.2 of \cite{Bre98b}, except that we have to tensor with $S$ instead of $W(k)$.
We prove the lemma by an induction on the minimal $p$-power that kills $\M$. First suppose $p^2 \M=0$. Then we have the following commutative diagram:

$$\begin{CD}
@.  S\otimes_{\phi, S} \Fil^r \M^{(p)}  @>>> S\otimes_{\phi, S} \Fil^r \M  @>\times p>>  S\otimes_{\phi, S} p\Fil^r \M  @>>>0 \\
@.   @VV1\otimes \phi_r V @VV1\otimes \phi_rV @VV1\otimes \phi_rV\\
0@>>> \M^{(p)} @>>> \M @>\times p>>  p\M @>>> 0,
\end{CD}$$
where the top row is right exact and the bottom row is short exact. We only need to show that the vertical arrow on the left is surjective.

Given $x \in \M^{(p)}$, suppose $1\otimes\phi_r(\hat{x})=x$ for $\hat{x} \in S\otimes_{\phi, S} \Fil^r \M$, so $1\otimes\phi_r(p\hat{x})=0$.
Since $p\M \in \Mod_{S_1}^{\phi}$, by Lemma 2.2.1 of \cite{Car08} (note that we need $r<p-1$ here), we must have that $p\hat{x} \in  S\otimes_{\phi, S} (E(u)p\Fil^r\M +\Fil^pS \cdot p\M)$.
So $p\hat x= \sum_{i=1}^n a_i\otimes  p(E(u)\alpha_i+s_i\beta_i)$, where $a_i \in S, \alpha_i \in \Fil^r \M, s_i\in \Fil^p S, \beta_i \in \M$. Let $\hat y= \sum_{i=1}^n a_i\otimes  (E(u)\alpha_i+s_i\beta_i)$, then $p\hat x=p\hat y$, and $1\otimes \phi_r(\hat y)=pz$ for $z \in \M$ (again we need $r<p-1$ here, so that $\phi_r(\Fil^p S \M)=0$). Now suppose $1\otimes \phi_r(\hat z)=z$ for $\hat z \in  S\otimes_{\phi, S} \Fil^r \M $, then $\hat x -\hat y +p\hat z \in S\otimes_{\phi, S} \Fil^r \M^{(p)}$ and maps to $x$. And we are done.

Now suppose the lemma is true for $\M$ such that $p^m \M=0$. Then for $\M$ such that $p^{m+1}\M=0$ and $p^m \M \neq 0$, a similar process as above shows that $\M^{(p^m)} \in \MS$. By induction hypothesis $\M^{(p)}=(\M^{(p^m)})^{(p)} \in \MS$.

\end{proof}

Now, we define ``unipotency" for a module in $\MS$. We define it inductively.

\begin{defn} \label{unipotentdefn}
A module $\M \in \MS$ such that $p^2\M=0$ is called unipotent if $\M^{(p)}$ and $p\M$ are unipotent (as modules in $\Mod_{S_1}^{\phi}$). Inductively, $\M \in \MS$ such that $p^{m+1}\M=0$ and $p^m\M \neq 0$ is called unipotent if $\M^{(p)}$ and $p\M$ are unipotent. That is, $\M^{(p)}, (p\M)^{(p)}, \cdots, (p^{m-1}\M)^{(p)}$ and $p^m \M$ are all unipotent modules in $\Mod_{S_1}^{\phi}$.
We denote the unipotent subcategory by $\MSu$.
\end{defn}

\begin{lemma} Suppose $er=p-1, r<p-1$.
For $\M \in \MSu$, we have $p^i \Fil^r \M = p^i \M \cap \Fil^r \M$ for all $i$.

\end{lemma}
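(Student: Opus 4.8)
The plan is to prove the statement by induction on the minimal power $p^m$ killing $\M$, mirroring the structure of Lemma 2.0.18 but now tracking the filtration precisely. The key claim $p^i\Fil^r\M = p^i\M \cap \Fil^r\M$ is a statement about the snake-lemma behavior of the filtration under the exact sequences $0 \to \M^{(p)} \to \M \xrightarrow{\times p} p\M \to 0$ and $0 \to p\M \to \M \to \M/p\M \to 0$, so I would organize the proof around these.

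First I would reduce to the two extreme cases $i=1$ (equivalently, showing $p\Fil^r\M = p\M \cap \Fil^r\M$) and then iterate. Concretely, once $p\Fil^r\M = p\M\cap\Fil^r\M$ is known for every unipotent module, one applies it to $p\M$ (which is unipotent by Definition \ref{unipotentdefn}, being a module in $\Mod_{S_1}^{\phi,u}$ in the base case, or more generally having $p(p\M) = p^2\M$ which one checks lies in the unipotent subcategory) to get $p^2\Fil^r\M = p(p\Fil^r\M) = p(p\M\cap\Fil^r\M)$, and then a short diagram chase comparing this with $p^2\M \cap \Fil^r\M$ finishes the inductive step. So the heart of the matter is the case $i=1$.

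For $i=1$: the inclusion $p\Fil^r\M \subseteq p\M\cap\Fil^r\M$ is trivial. For the reverse, take $x \in p\M \cap \Fil^r\M$, say $x = py$ with $y \in \M$. The obstruction to writing $x \in p\Fil^r\M$ is exactly that $y$ need not lie in $\Fil^r\M$; what we know is that $py \in \Fil^r\M$, i.e. $y \in \M$ maps into $\Fil^r(p\M) = p\Fil^r\M$ under... — here is where I would invoke the structure of $\M^{(p)}$ from Lemma 2.0.18 together with the fact that $\M^{(p)}$ is unipotent, reducing to a filtration comparison on modules killed by $p$. On modules killed by $p$, the equivalence $\Mod_{S_1}^{\phi} \simeq \Mod_{k[u]/u^{ep}}^\phi$ of Proposition 2.0.15 and the abelianness (Corollary 1.0.14(2), Theorem 2.0.16) let me use Lemma \ref{submodule} to put filtrations in "base adaptée" form, where the comparison of $p\Fil^r\M$ with $p\M\cap\Fil^r\M$ becomes the kind of explicit monomial computation already carried out in the proof of Theorem 1.0.13 (the argument that $f(\Fil^r\M_1) = f(\M_1)\cap\Fil^r\M_2$). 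I expect unipotency to enter precisely to rule out the étale-type pathologies, exactly as in Lemma \ref{lemma1}(2) and Lemma \ref{lemma2}(2) where the hypothesis $\prod_{i=0}^\infty \phi^i(A) = 0$ was essential.

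The main obstacle I anticipate is the interaction between the $S$-module structure (as opposed to $k[u]/u^s$) and the filtration: $\Fil^r\M$ contains $\Fil^rS\cdot\M$ rather than a single monomial ideal times $\M$, so the clean "base adaptée" decomposition of Lemma \ref{submodule} is not directly available at the level of $\M$ itself, only after passing to $\M/\Fil^pS_1\M$ via Proposition 2.0.15. Managing this passage — lifting a filtration comparison from $\Mod_{k[u]/u^{ep}}^\phi$ back to $\Mod_{S_1}^\phi$ and then assembling the $p$-torsion and $p$-divisible pieces compatibly — is the delicate point; the hypothesis $er = p-1$ (not merely $er \le p-1$) and $r < p-1$ should be used exactly here, the latter to guarantee $\phi_r(\Fil^pS\cdot\M) = 0$ so that the functor of Proposition 2.0.15 is defined and exact, and the former to make the equivalences of Theorem 1.0.8 available when one needs to move between different truncations $u^s$.
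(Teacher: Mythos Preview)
Your reduction to $i=1$ followed by iteration via $p\M$ is sound: once $p\Fil^r\M = p\M \cap \Fil^r\M$ holds for every unipotent $\M$, applying it to $p\M$ gives $p^2\Fil^r\M = p^2\M \cap p\Fil^r\M = p^2\M \cap \Fil^r\M$, and so on. The problem is your argument for $i=1$ itself. You take $x = py \in \Fil^r\M$ and note that the obstruction is whether $y$ can be adjusted by an element of $\M^{(p)}$ to land in $\Fil^r\M$; but passing to $\M/\M^{(p)} \cong p\M$, this is asking exactly whether $x \in \Fil^r(p\M) = p\Fil^r\M$, which is what you are trying to prove. Your appeal to ``a filtration comparison on modules killed by $p$'' does not break this circle: when $p^m\M = 0$ with $m \geq 3$, neither $p\M$ nor $\M/\M^{(p)}$ is killed by $p$, so the abelian category $\Mod_{S_1}^{\phi,N,u}$ is not directly available for the comparison you need at level $i=1$.

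The paper's proof runs the induction in the opposite direction. For $\M$ killed by $p^{m+1}$ with $p^m\M \neq 0$, the crucial observation is that $p^m\M$ \emph{is} killed by $p$, and the inclusion $p^m\M \hookrightarrow \M^{(p)}$ is a monomorphism in the abelian category $\Mod_{S_1}^{\phi,N,u}$. Abelianness (specifically the identity $f(\Fil^r\M_1) = f(\M_1)\cap\Fil^r\M_2$ you cited from Theorem~1.0.13) then gives $p^m\Fil^r\M = p^m\M \cap \Fil^r\M^{(p)} = p^m\M \cap \Fil^r\M$ directly --- the \emph{top} degree $i=m$, not $i=1$. With $i=m$ in hand, the quotient $\M/p^m\M$ carries the filtration $\Fil^r\M/p^m\Fil^r\M$ (well-defined as a submodule precisely because $p^m\M \cap \Fil^r\M = p^m\Fil^r\M$), and after checking that this quotient is again unipotent (via short exact sequences $0 \to (p^i\M)^{(p)}/p^m\M \to (p^i\M/p^m\M)^{(p)} \to p^m\M \to 0$ and Lemma~1.0.12), the induction hypothesis on modules killed by $p^m$ yields the remaining $i < m$. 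So the key move you are missing is to attack the top degree first, where both sides of the relevant injection live in the $p$-torsion abelian category.
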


\begin{proof}
We prove by an induction on the minimal $p$-power that kills $\M$. Suppose $p^2 \M=0$ and $p \M \neq 0$. Then there is an injective morphism $p\M \to \M^{(p)}$. Since $\Mod_{S_1}^{\phi, N, u}$ is abelian, we have $p \Fil^r\M =p\M \cap \Fil^r\M^{(p)}=p\M \cap \Fil^r \M \cap \M^{(p)}=p\M \cap \Fil^r \M$.
Suppose the lemma is true for $\M$ such that $p^m \M=0$. Now suppose $p^{m+1}\M=0$ and $p^m \M \neq 0$. Then the injective morphism $p^m \M \to \M^{(p)}$ in the abelian category $\Mod_{S_1}^{\phi, N, u}$ gives us $p^m \Fil^r\M=p^m \M \cap \Fil^r \M^{(p)}=p^m \M \cap \Fil^r \M$.
We claim that the module $(\M/p^m\M, \Fil^r \M/p^m\Fil^r \M, \phi_r)$ is in $\MSu$. For any $0 \leq i \leq m-1$, we have the short exact sequence $0 \to (p^i\M)^{(p)}/p^m\M \to (p^i\M/p^m\M)^{(p)} \stackrel{h}{\longrightarrow} p^m\M \to 0$, where for $\bar{x} \in (p^i\M/p^m\M)^{(p)}$, take any lift $x \in p^i \M$, and define $h(\bar{x})=px$. Since $(p^i\M)^{(p)}/p^m\M$ and $p^m\M$ are unipotent, $(p^i\M/p^m\M)^{(p)}$ is unipotent by Lemma \ref{unip} (and Proposition \ref{prop functor Kisin Breuil}). And note that $(p^{m-1}\M/p^m\M)^{(p)}$ is just $p^{m-1}\M/p^m\M$.
So we have proved that $\M/p^m\M \in \MSu$. By the induction hypothesis, $p^i \Fil^r \M/p^m\Fil^r \M= (p^i \M/p^m\M) \cap (\Fil^r \M/p^m\Fil^r \M)$ for all $i$. It is then easy to deduce that $p^i \Fil^r \M = p^i \M \cap \Fil^r \M$ for all $i$.

\end{proof}

\begin{thm} \label{thm unip Breuil}
When $er=p-1$ and $r<p-1$, $\MSu$ is an abelian category
\end{thm}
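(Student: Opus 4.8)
The plan is to run the standard dévissage in the $p$-power torsion direction, using Theorem 2.0.16 (that $\Mod_{S_1}^{\phi,N,u}$ is abelian) as the base case and the structural lemmas (Lemma 2.0.18 on $\M^{(p)} \in \MS$ and Lemma 2.0.20 on the compatibility $p^i\Fil^r\M = p^i\M \cap \Fil^r\M$) as the inductive glue. First I would record that $\MSu$ is an additive category with kernels and cokernels formed at the level of $S$-modules, so the real content is: (a) the kernel, cokernel, image and coimage of a morphism $f\colon \M_1 \to \M_2$ in $\MSu$, equipped with the induced $\Fil^r$, $\phi_r$ and $N$, again lie in $\MSu$; and (b) the canonical map $\mathrm{coim}(f) \to \mathrm{im}(f)$ is an isomorphism. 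For (a) one must check both the module-theoretic finiteness (each object is $\oplus S_{n_i}$), the filtration conditions, and unipotency in the sense of Definition \ref{unipotentdefn}; for unipotency of kernels/images/cokernels I would reduce to Lemma \ref{unip}-type statements in each graded piece $p^i(-)/p^{i+1}(-)$, since unipotency of an object in $\MSu$ is by definition detected on the pieces $(p^i\M)^{(p)}$ and on $p^m\M$, all of which live in $\Mod_{S_1}^{\phi}$ where abelianness is already known.

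The key steps, in order: (1) Reduce to $f$ injective or surjective by first showing $\mathrm{im}(f) \in \MSu$: as in the proof of Theorem 1.0.13(1), this is where one shows the image with $\Fil^r(\mathrm{im} f) = f(\Fil^r\M_1) = f(\M_1)\cap \Fil^r\M_2$ is a genuine object; here one uses Lemma \ref{unip} together with Lemma \ref{unipotentdefn} applied to the $p$-adic filtration, and Lemma 2.0.20 to control how $\Fil^r$ interacts with $p$-power torsion. (2) With $f$ surjective, identify the kernel $\mathcal K$, put $\Fil^r\mathcal K := \mathcal K \cap \Fil^r\M_1$, and verify $\phi_r(\Fil^r\mathcal K)$ generates $\mathcal K$ by lifting a basis of $\Fil^r\M_2$ to $\Fil^r\M_1$ exactly as in Theorem 1.0.13; the $N$-compatibility is automatic since $N$ commutes with $f$, and finiteness/unipotency of $\mathcal K$ follows by dévissage on $p^i\mathcal K$ using the base case. (3) With $f$ injective, do the dual computation for the cokernel $\mathcal N$, with $\Fil^r\mathcal N$ the image of $\Fil^r\M_2$; again the base case handles each graded piece. (4) Finally, $\mathrm{coim}(f) \xrightarrow{\sim} \mathrm{im}(f)$: granted (1), the underlying $S$-module map is an isomorphism, and the equalities $f(\Fil^r\M_1) = f(\M_1)\cap\Fil^r\M_2$ from step (1) show the filtrations match, so the morphism is an isomorphism in $\MSu$.

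The main obstacle I anticipate is step (1): verifying that $\mathrm{im}(f)$ (equivalently each kernel/cokernel) is genuinely \emph{unipotent} in the inductive sense of Definition \ref{unipotentdefn}, rather than merely an object of $\MS$. This requires knowing that the formation of $(-)^{(p)}$ and $p(-)$ commutes appropriately with taking images/kernels/cokernels — this is where Lemma \ref{unip} (unipotency in a short exact sequence of $\Modus$-objects) combined with Lemma 2.0.18 and the abelianness of $\Mod_{S_1}^{\phi,u}$ from Corollary 1.0.14 does the work — and that the relevant $\Fil^r$'s are compatible with the $p$-adic filtration, which is precisely the content of Lemma 2.0.20. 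Once the bookkeeping of these compatibilities along the $p^i\M$ stratification is set up, the proof is a routine dévissage reducing every statement to the already-established abelianness of $\Mod_{S_1}^{\phi,N,u}$.
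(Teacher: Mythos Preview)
Your proposal is correct and follows essentially the same d\'evissage strategy as the paper: reduce to the abelianness of $\Mod_{S_1}^{\phi,N,u}$ (Theorem 2.0.16) and climb the $p$-adic filtration, checking unipotency at each stage via Lemma \ref{unip} and the filtration compatibility $p^i\Fil^r\M=p^i\M\cap\Fil^r\M$. The only organizational point worth noting is that the paper (following \cite{Bre98b}) isolates as a separate lemma the case of a \emph{surjection onto a $p$-torsion target} (Lemma 2.0.22), inducting on the $p$-power killing the source rather than directly on $p^i\mathcal K$; your step (2) would benefit from making this intermediate case explicit, since it is there that freeness of $\mathcal K/p\mathcal K$ (hence $\mathcal K\in\MS$) is established before unipotency can even be discussed.
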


The proof of the theorem follows the same strategy as in Section 2.3 of \cite{Bre98b}. We have already shown that the subcategory with objects killed by $p$ is abelian, so we only need to do a d\'evissage argument as in Section 2.3 of \cite{Bre98b}.

The following lemma will be useful.
\begin{lemma}\label{modp}
For $\M \in \MSu$, then $\M/p\M \in \MSu$.
\end{lemma}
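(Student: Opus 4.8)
The plan is to deduce the lemma from the mod-$p$ results already established, by a short induction on the least integer $n\geq 0$ with $p^{n}\M=0$, the essential input being the computation of the induced filtration supplied by Lemma 2.0.19.

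First I would check that $\M/p\M$ is an object of $\MS$. It is free over $S_{1}$; give it the filtration $\Fil^{r}(\M/p\M):=(\Fil^{r}\M+p\M)/p\M$, which clearly contains $\Fil^{r}S_{1}\cdot(\M/p\M)$, together with the $N$ induced from $\M$ (this descends since $N(p\M)\subseteq p\M$) and the $\phi_{r}$ induced from $\M$. To see that $\phi_{r}$ is well defined on the quotient, note that by Lemma 2.0.19 an element of $\Fil^{r}\M$ lying in $p\M$ already lies in $p\,\Fil^{r}\M$, i.e. equals $pg$ with $g\in\Fil^{r}\M$, and then $\phi_{r}(pg)=p\,\phi_{r}(g)\in p\M$ by additivity of $\phi_{r}$; hence $\phi_{r}$ kills the kernel of $\Fil^{r}\M\to\Fil^{r}(\M/p\M)$. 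This is the only point where the standing hypothesis $er=p-1$, $r<p-1$ is needed, and it enters through Lemma 2.0.19. Since $\M/p\M$ is killed by $p$, Definition \ref{unipotentdefn} makes the claim $\M/p\M\in\MSu$ equivalent to the statement that $\M/p\M$ is unipotent as an object of $\Mod_{S_{1}}^{\phi}$.

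Now I would induct on the least $n\geq 0$ with $p^{n}\M=0$. For $n\leq 1$ there is nothing to do: either $\M=0$, or $p\M=0$ and $\M/p\M=\M\in\MSu$ by hypothesis. Suppose $n\geq 2$ and the lemma holds for all objects of $\MSu$ killed by $p^{n-1}$. By minimality of $n$ we have $p^{n-1}\M\neq 0$, so the claim established in the course of the proof of Lemma 2.0.19 --- that $\M/p^{m}\M\in\MSu$ whenever $\M\in\MSu$, $p^{m+1}\M=0$ and $p^{m}\M\neq 0$ --- applies with $m=n-1$ and gives $\M':=\M/p^{n-1}\M\in\MSu$. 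As $p^{n-1}\M'=0$, the induction hypothesis applies to $\M'$ and yields $\M'/p\M'\in\MSu$. Finally, because $n\geq 2$ we have $p^{n-1}\M\subseteq p\M$, hence $p\M'=p\M/p^{n-1}\M$, and the third isomorphism theorem identifies $\M'/p\M'=(\M/p^{n-1}\M)/(p\M/p^{n-1}\M)$ with $\M/p\M$; a direct check shows the two $\Fil^{r}$, the two $\phi_{r}$ and the two $N$ agree (both sides carry the structures induced from $\M$, using Lemma 2.0.19 once more to identify the filtration). This completes the induction, so $\M/p\M\in\MSu$.

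The substance of the argument is concentrated in the two facts $\Fil^{r}\M\cap p\M=p\,\Fil^{r}\M$ and $\M/p^{m}\M\in\MSu$, which are exactly Lemma 2.0.19 and the claim inside its proof, and which is where the hypothesis $er=p-1$, $r<p-1$ is genuinely exploited. Granting those, the present lemma is a purely formal telescoping, and I anticipate no further obstacle.
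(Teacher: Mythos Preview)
Your proof is correct, but it takes a different route from the paper's. The paper also inducts on the minimal $n$ with $p^{n}\M=0$, but instead of passing to the quotient $\M/p^{n-1}\M$ and invoking the internal claim of Lemma~2.0.19, it applies the induction hypothesis to $p\M$ (which is killed by $p^{n-1}$) to obtain $p\M/p^{2}\M\in\MSu$, and then uses the short exact sequence
\[
0 \longrightarrow \M^{(p)}/(p\M)^{(p)} \longrightarrow \M/p\M \xrightarrow{\ \times p\ } p\M/p^{2}\M \longrightarrow 0
\]
in $\Mod_{S_{1}}^{\phi,N}$. Both ends are unipotent (the left end because $\Mod_{S_{1}}^{\phi,N,u}$ is abelian and $(p\M)^{(p)}\hookrightarrow\M^{(p)}$ are unipotent by Definition~\ref{unipotentdefn}), so $\M/p\M$ is unipotent by Lemma~1.0.12. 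The paper's argument is thus more self-contained: it needs only Theorem~2.0.16 and the extension-stability of unipotency, whereas your argument leans on the auxiliary fact $\M/p^{m}\M\in\MSu$ extracted from the middle of the proof of Lemma~2.0.19. Both are short; the paper's has the minor advantage that the reader need not re-examine that earlier proof to isolate the needed claim.
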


\begin{proof}
We prove by an induction on the minimal $p$-power that kills $\M$.
When $p^2 \M=0$, then the short exact sequence $0 \to \M^{(p)}/p\M \to \M/p\M \stackrel{\times p}{\longrightarrow} p\M \to 0$ concludes the result.
Suppose the lemma is true for $\M$ such that $p^m \M=0$.
Now suppose $p^{m+1}\M=0$ and $p^m \M \neq 0$. Since $p\M$ satisfies the induction hypothesis, $p\M/p^2\M \in \MSu$. Then the short exact sequence $0 \to \M^{(p)}/(p\M)^{(p)} \to \M/p\M  \stackrel{\times p}{\longrightarrow} p\M/p^2\M \to 0$ shows that $\M/p\M$ is unipotent.
\end{proof}

We begin with generalizing Lemma 2.3.1.3 of \cite{Bre98b} to the $er=p-1, r<p-1$ unipotent situation.

\newcommand{\K}{\mathcal K}

\begin{lemma} \label{lemma kernel}
Let $f: \mathcal N \to \M$ be a morphism in $\MSu$ which is surjective on the $S$-modules. Suppose that $p\M=0$. Then $\Fil^r \mathcal N \to \Fil^r \M$ is surjective and $(\K,\Fil^r \K, \phi_r)$ is a kernel of $f$ in $\MSu$, here $\K := \ker f, \Fil^r \K := \K \cap \Fil^r \mathcal N$.
\end{lemma}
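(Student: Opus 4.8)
The plan is to imitate the argument of Lemma 2.3.1.3 of \cite{Bre98b}, adapted to the $er=p-1$ unipotent setting, using as black boxes that $\Mod_{S_1}^{\phi,N,u}$ is abelian (Theorem 2.0.16(2)) together with Lemma 2.0.17 (compatibility of $\Fil^r$ with the $p$-adic filtration) and the fact that $\M^{(p)} \in \MSu$ (Lemma 2.0.18, and that it is unipotent by Definition \ref{unipotentdefn}). First I would establish surjectivity of $\Fil^r \mathcal N \to \Fil^r \M$. Since $p\M = 0$, the morphism $f$ factors through $\mathcal N/p\mathcal N$, which is in $\MSu$ by Lemma \ref{modp}, and hence is killed by $p$; so it suffices to treat the induced surjection $\mathcal N/p\mathcal N \to \M$ of objects of $\Mod_{S_1}^{\phi,N,u}$. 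This last category is abelian, so the map is a morphism of the abelian category and $\Fil^r(\mathcal N/p\mathcal N) \to \Fil^r \M$ is surjective; combined with Lemma 2.0.17, which identifies $\Fil^r(\mathcal N/p\mathcal N)$ with $\Fil^r \mathcal N/(p\mathcal N \cap \Fil^r \mathcal N) = \Fil^r\mathcal N/p\Fil^r\mathcal N$, one deduces $\Fil^r\mathcal N \to \Fil^r\M$ is surjective.

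Next I would check that $(\K, \Fil^r\K, \phi_r)$, with $\K = \ker f$ and $\Fil^r\K = \K \cap \Fil^r\mathcal N$, genuinely lies in $\MSu$ and is a kernel of $f$. The $S$-module structure of $\K$ is the honest kernel, so $\K = \oplus_{i} S_{n_i}$ follows from $\mathcal N$ being such a direct sum and $f$ being surjective (the kernel of a surjection of finite direct sums of $S_{n_i}$'s is again of this shape, using that $p\M = 0$ forces the relevant torsion bookkeeping to work out as in Lemma \ref{submodule}). The containment $\Fil^r S \cdot \K \subseteq \Fil^r\K$ and the compatibility of $N$ with $\Fil^r\K$ and of $\phi_r$ with $N$ are all inherited from $\mathcal N$ by restriction. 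The two points requiring real argument are: (a) $\phi_r(\Fil^r\K)$ generates $\K$; and (b) $\K$ is unipotent in the sense of Definition \ref{unipotentdefn}. For (a) I would argue exactly as in the proof of Theorem 1.0.13 (the $k[u]/u^p$ case): pick a base adaptée, write $\Fil^r\mathcal N = \oplus \alpha_i$ (modulo $\Fil^p S\cdot\mathcal N$) and $\Fil^r\M = \oplus\beta_j$, lift each $\beta_j$ along the surjection $\Fil^r\mathcal N \to \Fil^r\M$ just established, and conclude that the $\alpha_i$ together with the lifts generate $\Fil^r\mathcal N$, hence their $\phi_r$-images generate $\mathcal N$, hence the $\phi_r(\alpha_i)$ generate $\K$.

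For (b), I would proceed by induction on the minimal power of $p$ killing $\mathcal N$, mirroring Definition \ref{unipotentdefn}: one needs $\K^{(p)}$ and $p\K$ unipotent in $\Mod_{S_1}^{\phi}$. Since $p\M = 0$, multiplication by $p$ on $\mathcal N$ maps into $\K$, giving $p\mathcal N \subseteq \K$, and restricting $f$ shows $p\K = p\mathcal N$ (as $f$ is surjective and $p\M=0$), which is unipotent because $\mathcal N \in \MSu$. For $\K^{(p)}$, I would use that $\K^{(p)} = \K \cap \mathcal N^{(p)}$ sits inside $\mathcal N^{(p)}$, which is in $\MSu$ by Lemma 2.0.18 and is unipotent; the restriction of $f$ to $\mathcal N^{(p)}$ has image in $\M^{(p)} = \M$ (since $p\M = 0$) and kernel $\K^{(p)}$, so $\K^{(p)}$ is a kernel in the abelian category $\Mod_{S_1}^{\phi,N,u}$ (after reducing mod $p$ where necessary), hence unipotent by Lemma \ref{unip} applied to the exact sequence $0 \to \K^{(p)} \to \mathcal N^{(p)} \to \mathrm{image} \to 0$, the image being a submodule of the unipotent $\M$. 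Finally, that $(\K,\Fil^r\K,\phi_r)$ is a categorical kernel of $f$ in $\MSu$ is formal: any morphism $g: \mathcal P \to \mathcal N$ in $\MSu$ with $f\circ g = 0$ factors uniquely through $\K$ as $S$-modules, and this factorization automatically respects $\Fil^r$ (because $g(\Fil^r\mathcal P) \subseteq \Fil^r\mathcal N \cap \K = \Fil^r\K$), $\phi_r$, and $N$.

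The main obstacle I anticipate is step (b), specifically verifying that $\K^{(p)}$ is unipotent: one must be careful that the exact sequences used are exact \emph{on filtrations} (so that Lemma \ref{unip} applies), which is precisely where Lemma 2.0.17 and the fact that $\Mod_{S_1}^{\phi,N,u}$ is abelian (not merely additive) get used, and one must make sure the inductive bookkeeping on $\M^{(p)}, (p\M)^{(p)}, \ldots$ matches Definition \ref{unipotentdefn} rather than slipping to a weaker condition. The surjectivity of $\Fil^r\mathcal N \to \Fil^r\M$, by contrast, should be the cleanest part once one reduces to the mod-$p$ situation.
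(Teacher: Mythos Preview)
There is a genuine gap in step (b). Your claim that ``$p\K = p\mathcal N$ (as $f$ is surjective and $p\M=0$)'' is false. From $p\M=0$ you get $p\mathcal N \subseteq \K$, hence $p\K \subseteq p\mathcal N$, but the reverse inclusion fails in general: take $\mathcal N = S_2$, $\M = S_1$, and $f$ the reduction modulo $p$. Then $\K = pS_2 \cong S_1$, so $p\K = 0$, while $p\mathcal N = pS_2 \neq 0$. Thus you cannot deduce unipotency of $p\K$ simply from unipotency of $p\mathcal N$, and your induction on the minimal $p$-power killing $\mathcal N$ never actually fires.

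The paper's argument repairs exactly this point. One first shows $\K^{(p)} = \K \cap \mathcal N^{(p)}$ and the quotient $\mathcal N^{(p)}/\K^{(p)}$ are unipotent (your treatment of $\K^{(p)}$ is fine). Then the short exact sequence
\[
0 \to \mathcal N^{(p)}/\K^{(p)} \to \mathcal N/\K \;(\cong \M) \xrightarrow{\;\times p\;} p\mathcal N/p\K \to 0
\]
shows $p\mathcal N/p\K$ is unipotent and killed by $p$. Now $p\K = \ker(p\mathcal N \twoheadrightarrow p\mathcal N/p\K)$, with $p^{m-1}(p\mathcal N)=0$, so the \emph{induction hypothesis} (the lemma itself, for smaller torsion) applies to this surjection and yields $p\K \in \MSu$. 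This is where the induction is genuinely used.

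A second, related gap is your assertion that $\K \cong \oplus_i S_{n_i}$ ``follows from $\mathcal N$ being such a direct sum and $f$ being surjective''. This is not formal for $S$-modules. The paper obtains it by first showing $\K/p\K$ is finite free over $S_1$ via the short exact sequence of $S_1$-modules
\[
0 \to p\mathcal N/p\K \to \K/p\K \to \K/p\mathcal N \to 0,
\]
where both ends are objects of $\Mod_{S_1}^{\phi,N,u}$ (the left one as above, the right one as the kernel of $\mathcal N/p\mathcal N \twoheadrightarrow \M$ in that abelian category), hence free over $S_1$; then one invokes Lemma 2.3.1.1 of \cite{Bre98b}. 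Your outline for (a), the surjectivity of $\Fil^r\mathcal N \to \Fil^r\M$, and the categorical-kernel verification are fine and match the paper's strategy.
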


\begin{proof}
The steps of the proof are exactly the same as Lemma 2.3.1.3 of \cite{Bre98b}, we just need to check the ``unipotency" on each step. For the convenience of the reader, we give a sketch of the proof here.
If $p\mathcal N=0$, then the lemma is true because $\Mod_{S_1}^{\phi, N, u}$ is abelian.
We do an induction on the minimal $p$-power that kills $\mathcal N$. Suppose the lemma is true for $\mathcal N$ such that $p^{m-1}\mathcal N=0$, and suppose now $p^m \mathcal N=0, p^{m-1}\mathcal N \neq 0$.
By the same proof as in Lemma 2.3.1.3 of \cite{Bre98b}, we have $\Fil^r \K \cap p\K=p\Fil^r \K$.
The exact sequence $0 \to \K\cap \mathcal N^{(p)} \to \mathcal N^{(p)} \to \M$ in $\Mod_{S_1}^{\phi, N}$ shows that $\K\cap \mathcal N^{(p)} (=\K^{(p)})$ and $\mathcal N^{(p)}/\K\cap \mathcal N^{(p)}$ are unipotent.
The short exact sequence $0 \to \mathcal N^{(p)}/\K\cap N^{(p)} \to \mathcal \mathcal N/\mathcal K \stackrel{\times p}{\longrightarrow} p\mathcal N/p\K \to 0$ shows that $p\mathcal N/p\K$ is unipotent.
Now $p\K = \ker(p\mathcal N \to p\mathcal N/p\K)$, since $p^{m-1}(p\mathcal N)=0$, so by the induction hypothesis, $p\K \in \MSu$.
Then, the short exact sequence $0 \to \mathcal K/p\mathcal N \to \mathcal N/p\mathcal N \to \M \to 0$ ($\mathcal N/p\mathcal N$ is unipotent by Lemma \ref{modp}) shows that $\mathcal K/p\mathcal N$ is unipotent.
And the short exact sequence $0 \to p\mathcal N/p\mathcal K \to \mathcal K/p\mathcal K \to \mathcal K/p\mathcal N \to 0$ of $S_1$-modules shows that $\K/p\K$ is finite free over $S_1$. By Lemma 2.3.1.1 of \cite{Bre98b}, $\mathcal K \in \MS$. And we are done.
\end{proof}

\begin{proof}[Proof of Theorem \ref{thm unip Breuil}]
Firstly we claim that if $f: \mathcal N \to \M$ is a morphism in $\MSu$, then $(\K,\Fil^r \K, \phi_r)$ is a kernel of $f$ in $\MSu$. But this is just the $er=p-1, r<p-1$ unipotent generalization of Proposition 2.3.2.1 of \cite{Bre98b}. Just apply Lemma \ref{lemma kernel}, and the proof is almost verbatim.

Secondly we claim that if $f: \mathcal N \to \M$ is a morphism in $\MSu$, then $f(\Fil^r \mathcal N)=\Fil^r \M \cap f(\mathcal N)$, and the naturally defined cokernel $(\mathcal C, \Fil^r \mathcal C, \phi_r)$ is a cokernel of $f$ in $\MSu$. And this is a generalization of Proposition 2.3.2.2 of \cite{Bre98b} to the $er=p-1, r<p-1$ unipotent situation. The reader can easily check that everything works through if we check ``unipotency" at each step.
\end{proof}

\begin{prop} \label{prop unip in the middle}
Suppose $er=p-1$ and $r<p-1$. Let $0 \to \M_1 \to \M \to \M_2 \to 0$ be a short exact sequence in $\MS$ where $\M_1, \M_2$ are unipotent. Then $\M$ is also unipotent.
\end{prop}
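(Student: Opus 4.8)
The plan is to run the d\'evissage of \cite[\S 2.3]{Bre98b}, by induction on the least integer $n$ with $p^n\M=0$. The base case $n=1$ is immediate: $\M$ then lies in $\Mod_{S_1}^{\phi,N}$, so the given sequence, with $N$ forgotten, is short exact in $\Mod_{S_1}^{\phi}$; transporting it through the equivalence $\Mod_{S_1}^{\phi}\simeq\Mod_{k[u]/u^{ep}}^{\phi}$ of Proposition 2.0.15 --- which sends short exact sequences to short exact sequences and restricts to an equivalence on unipotent subcategories --- and applying Lemma \ref{unip}, one concludes that $\M$ is unipotent, which is exactly Definition \ref{unipotentdefn} when $p\M=0$.

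For the inductive step, assume $n\geq 2$ and the proposition for all short exact sequences in $\MS$ whose middle term is killed by $p^{n-1}$. By Definition \ref{unipotentdefn} it is enough to prove that $p\M$ and $\M^{(p)}$ are unipotent; both are killed by $p^{n-1}$, and $\M^{(p)}\in\MS$ (this uses $r<p-1$). The snake lemma for multiplication by $p$ on $0\to\M_1\to\M\to\M_2\to 0$ gives the exact sequence
\[
0\to\M_1^{(p)}\to\M^{(p)}\to\M_2^{(p)}\xrightarrow{\ \delta\ }\M_1/p\M_1\to\M/p\M\to\M_2/p\M_2\to 0 ,
\]
and, putting $Q:=\ker\delta$, the short exact sequence $0\to\M_1^{(p)}\to\M^{(p)}\to Q\to 0$. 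Applying the nine lemma to the three vertical short exact sequences $0\to\M_1^{(p)}\to\M_1\xrightarrow{\times p}p\M_1\to 0$, $0\to\M^{(p)}\to\M\xrightarrow{\times p}p\M\to 0$ and $0\to Q\to\M_2\to\M_2/Q\to 0$, compatible over $0\to\M_1\to\M\to\M_2\to 0$, also gives the short exact sequence $0\to p\M_1\to p\M\to\M_2/Q\to 0$.

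Now $\M_1^{(p)},\M_2^{(p)},p\M_1,p\M_2$ are unipotent since $\M_1,\M_2$ are (Definition \ref{unipotentdefn}); and $\delta$ is a morphism in the abelian category $\Mod_{S_1}^{\phi,N,u}$ --- its source $\M_2^{(p)}$ is unipotent and its target $\M_1/p\M_1$ is unipotent by Lemma \ref{modp} --- so $Q=\ker\delta$ is unipotent, and since $Q$ is a subobject of $\M_2^{(p)}$, hence of $\M_2$, the quotient $\M_2/Q$ is unipotent as well. Therefore $0\to p\M_1\to p\M\to\M_2/Q\to 0$ has unipotent outer terms and middle term killed by $p^{n-1}$, whence $p\M$ is unipotent by the inductive hypothesis; and $0\to\M_1^{(p)}\to\M^{(p)}\to Q\to 0$ is a short exact sequence in $\Mod_{S_1}^{\phi,N}$ with unipotent outer terms, whence $\M^{(p)}$ is unipotent by Lemma \ref{unip}. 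By Definition \ref{unipotentdefn}, $\M$ is unipotent.

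I expect the genuine obstacle to be not the module-level diagram chasing above but the verification that each auxiliary module ($\M^{(p)}$, $Q$, $\M_2/Q$, the $\M_i/p\M_i$, and $p\M$), equipped with the induced filtration, $\phi_r$ and $N$, really is an object of the stated category, and that the displayed sequences are short exact \emph{as filtered} objects --- the delicate point being surjectivity on the $\Fil^r$'s. This is where the case $er=p-1$ is special, and where Lemma 2.0.19 (the equality $p^i\Fil^r\M_j=p^i\M_j\cap\Fil^r\M_j$ for the already-unipotent $\M_j$) is the essential input; one cannot apply it to $\M$ itself, which is why the two facts have to be proved in the order above. These checks are carried out exactly as the corresponding steps of \cite[\S 2.3]{Bre98b} (and Lemma 2.0.22), and I anticipate no new difficulty in them beyond careful bookkeeping; the fact that $\delta$ is a morphism of filtered $\phi_r$-$N$-modules is part of the same routine.
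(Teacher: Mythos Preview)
Your argument is correct and follows essentially the same route as the paper: both prove unipotency of $\M^{(p)}$ and $p\M$ via the snake-lemma exact sequence for multiplication by $p$, using that $\Mod_{S_1}^{\phi,N,u}$ is abelian to handle $Q=\ker\delta$, and then feeding $0\to p\M_1\to p\M\to p\M/p\M_1\;(\cong \M_2/Q)\to 0$ into an induction. The only cosmetic difference is the induction variable: you induct on the least $n$ with $p^n\M=0$, while the paper inducts on the least power killing $\M_1$; your choice is arguably cleaner, and your nine-lemma phrasing makes the identification $p\M/p\M_1\cong\M_2/Q$ explicit. Your final paragraph correctly flags the genuine work --- that the auxiliary modules lie in $\MS$ and that the sequences are filtered-exact --- and rightly points to Lemma 2.0.19 and the already-established abelianness of $\MSu$ (Theorem 2.0.20) as the inputs; note in particular that your step ``$\M_2/Q$ is unipotent'' is really an appeal to Theorem 2.0.20.
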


\begin{proof}
Apply snake lemma to the following commutative diagram,

$$\begin{CD}
0@>>> \M_1 @>>> \M @>>>  \M_2 @>>> 0 \\
@.   @VV\times p V @VV\times p V @VV\times p V\\
0@>>> \M_1 @>>> \M @>>>  \M_2 @>>> 0.
\end{CD}$$

We have the exact sequence $0 \to \M_1^{(p)} \to \M^{(p)} \to \M_2^{(p)} \stackrel{h}{\longrightarrow} \M_1/p\M_1$, where $h$ is the connecting homomorphism. Then it is easy to conclude that $\M^{(p)}\in \MSu$.
We now prove that $p\M$ is unipotent. We prove this by an induction on the minimal $p$-power that kills $\M_1$. If $p\M_1=0$, then by the short exact sequence $0 \to \M^{(p)}/\M_1 \to \M/\M_1 (=\M_2) \to \M/\M^{(p)} \to 0$, and since $\MSu$ is abelian, $p\M =\M/\M^{(p)} \in \MSu$.
Suppose the proposition is true for $\M_1$ such that $p^m \M_1 =0$. Now suppose $p^{m+1}\M_1 =0$ and $p^m \M_1 \neq 0$. The short exact sequence $0 \to \M^{(p)}/\M_1^{(p)} \to \M/\M_1 \stackrel{\times p}{\longrightarrow} p\M/p\M_1 \to 0$ shows that $p\M/p\M_1 \in \MSu$. Then apply induction hypothesis to the short exact sequence $0 \to p\M_1 \to p\M \to p\M/p\M_1 \to 0$ to conclude that $p\M$ is unipotent.

\end{proof}

\begin{corollary}
Let $\mathcal C$ be the smallest full subcategory of $\MS$ which contains $\Mod_{S_1}^{\phi, N, u}$ and is stable by extension. Then $\mathcal C$ is the same as $\MSu$.
\end{corollary}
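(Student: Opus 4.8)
The plan is to prove the two inclusions $\mathcal C \subseteq \MSu$ and $\MSu \subseteq \mathcal C$. The first is formal: $\MSu$ is a full subcategory of $\MS$ which contains $\Mod_{S_1}^{\phi, N, u}$, and by Proposition 2.0.23 (an extension in $\MS$ of two unipotent modules is unipotent) it is stable under extension; since $\mathcal C$ is by definition the smallest such subcategory, $\mathcal C \subseteq \MSu$.

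For the reverse inclusion I would argue by d\'evissage, inducting on the minimal power of $p$ that annihilates an object $\M \in \MSu$. If $p\M = 0$ then, by the very definition of unipotency, $\M$ is an object of $\Mod_{S_1}^{\phi, N, u} \subseteq \mathcal C$; this is the base case. For the inductive step assume $p^{m+1}\M = 0$, $p^m\M \neq 0$ with $m \geq 1$, and consider the sequence
$$ 0 \longrightarrow p\M \longrightarrow \M \longrightarrow \M/p\M \longrightarrow 0 $$
in $\MS$, where $p\M$ is given the filtration $p\Fil^r\M$ and $\M/p\M$ the image filtration of $\Fil^r\M$, each carrying the induced $\phi_r$ and $N$. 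The crux is that this is a \emph{short exact} sequence of torsion Breuil modules in the sense of Section 2, i.e.\ it is strict on filtrations: exactness on the underlying $S$-modules is clear, while the filtration sequence $0 \to p\Fil^r\M \to \Fil^r\M \to \Fil^r\M/p\Fil^r\M \to 0$ is exact precisely because $p\Fil^r\M = p\M \cap \Fil^r\M$ by Lemma 2.0.19 (this is where the hypothesis $er = p-1$, $r<p-1$ is used), and compatibility with $\phi_r$ and $N$ is immediate. Granting this: $p\M$ is unipotent, being one of the two modules whose unipotency defines that of $\M$ in Definition \ref{unipotentdefn}, and it is killed by $p^m$, so $p\M \in \mathcal C$ by the induction hypothesis; and $\M/p\M$ is killed by $p$ and unipotent by Lemma \ref{modp}, hence $\M/p\M \in \Mod_{S_1}^{\phi, N, u} \subseteq \mathcal C$. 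Since $\mathcal C$ is stable under extension, $\M \in \mathcal C$, which closes the induction.

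I do not anticipate any real difficulty: the argument is a routine d\'evissage built on the two facts already in hand --- stability of $\MSu$ under extension (Proposition 2.0.23) and $\M/p\M$ unipotent (Lemma \ref{modp}) --- plus the strictness on filtrations of the multiplication-by-$p$ sequence, which is exactly Lemma 2.0.19. The only point one must be slightly careful about is that $0 \to p\M \to \M \to \M/p\M \to 0$ really is a short exact sequence for the notion of ``extension'' implicit in the definition of $\mathcal C$ (namely, short exact in $\MS$, hence strict on filtrations), and this is precisely what Lemma 2.0.19 guarantees.
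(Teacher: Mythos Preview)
Your proof is correct and follows exactly the same approach as the paper: Proposition 2.0.23 gives $\mathcal C \subseteq \MSu$, and the reverse inclusion is by induction on the minimal $p$-power killing $\M$, using the short exact sequence $0 \to p\M \to \M \to \M/p\M \to 0$ together with Lemma~\ref{modp}. Your write-up is in fact more careful than the paper's (which is a one-line sketch), in that you explicitly invoke Lemma 2.0.19 to justify strictness of the filtration on this sequence.
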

\begin{proof}
By Proposition \ref{prop unip in the middle}, $\mathcal C$ is a subcategory of $\MSu$. Now for any object $\M \in \MSu$, by an easy induction process on the minimal $p$-power that kills $\M$, and using the short exact sequence $0 \to p\M \to \M \to \M/p\M \to 0$ (i.e., $\M$ is an extension of $p\M$ and $\M/p\M$), it is clear that $\M \in \mathcal C$.
\end{proof}

\bibliographystyle{alpha}

\end{document}